\newcommand{\C}{\ensuremath{\mathcal{C}}}
\newcommand{\E}{\ensuremath{\mathcal{E}}}
\newcommand{\FF}{\ensuremath{\mathbb{F}}}
\newcommand{\NN}{\ensuremath{\mathbb{N}}}
\renewcommand{\S}{\ensuremath{\mathcal{S}}}
\newcommand{\gauss}[3][q]{\ensuremath{\genfrac[]{0pt}{}{#2}{#3}_{#1}}}
\newcommand{\PG}{\text{PG}}
\newcommand{\erz}[1]{\langle #1 \rangle}
\newcommand{\highlight}[1]{\textit{#1}}
\theoremstyle{plain}
\newtheorem{Theorem}{Theorem}[section]
\newtheorem{Lemma}[Theorem]{Lemma}
\newtheorem{Corollary}[Theorem]{Corollary}
\newtheorem{Proposition}[Theorem]{Proposition}
\newtheorem{Remarks}[Theorem]{Remarks}
\newtheorem{Result}[Theorem]{Result}
\theoremstyle{definition}
\newtheorem{Example}[Theorem]{Example}
\newtheorem{Definition}[Theorem]{Definition}
\begin{document}
\title{Maximal co-cliques in the Kneser graph on plane-solid flags in $\PG(6,q)$}
\author{Klaus Metsch\and Daniel Werner\thanks{Justus-Liebig-Universität, Mathematisches Institut, Arndtstraße 2, D-35392 Gießen}}
\maketitle

\setkomafont{disposition}{\normalcolor\sffamily\bfseries}

\begin{abstract}
For $q>27$ we determine the independence number $\alpha(\Gamma)$ of the Kneser graph $\Gamma$ on plane-solid flags in $\PG(6,q)$. More precisely we describe all maximal independent sets of size at least $q^{11}$ and show that every other maximal example has cardinality at most a constant times $q^{10}$.
\end{abstract}

\begin{section}{Introduction}
For integers $n\ge 2$ and prime powers $q$ we denote by $\PG(n,q)$ the $n$-dimensional projective space over the finite field $\FF_q$. A \highlight{flag} $F$ of $\PG(n,q)$ is a set of non-trivial subspaces of $\PG(n,q)$ such that $U\subseteq U'$ or $U'\subseteq U$ for all $U,U'\in F$. Here non-trivial means different from $\emptyset$ and $\PG(n,q)$. The set $\{\dim(U)\mid U\in F\}$ is called the \highlight{type} of the flag $F$. Two flags $F_1$ and $F_2$ of $\PG(n,q)$ are said to be in \highlight{general position}, if for all subspaces $U_1\in F_1$ and $U_2\in F_2$ we have $U_1\cap U_2=\emptyset$ or $\langle U_1,U_2\rangle=\PG(n,q)$.

If $S$ is a non-empty subset of $\{0,1,\dots,n-1\}$, then the \highlight{Kneser graph} of flags of type $S$ is the simple graph whose vertices are the flags of type $S$ of $\PG(n,q)$ with two flags $F$ and $F'$ adjacent if and only if they are in general position. If $S=\{i\}$ for some $i\in\{0,\dots,n-1\}$ then we also speak of the Kneser graph of $i$-dimensional subspaces. Note that this graph, among other generalizations of Kneser graphs, has already been defined by C. Güven \cite{Gueven_Thesis}.

This graph and maximal co-cliques therein were studied in \cite{Blokhuis_Brouwer} for $S=\{1,2\}$ and $n=4$, in \cite{Blokhuis_Brouwer_Gueven} for $S=\{0,n-1\}$ and $n\ge 2$, and in \cite{Blokhuis_Brouwer_Szoenyi} for $S=\{0,2\}$ and $n=4$. The result given in \cite{Blokhuis_Brouwer_Gueven} was already conjectured by Mussche in his thesis \cite{Mussche_Thesis} and is also given by C. Güven (one of the authors of \cite{Blokhuis_Brouwer_Gueven}) in her thesis \cite{Gueven_Thesis}.

In this paper we determine the independence number $\alpha(\Gamma)$ for the Kneser graph $\Gamma$ of type $\{2,3\}$ in $\PG(6,q)$ for $q>27$. We point out that a flag of type $\{2,3\}$ of $\PG(6,q)$ is a self-dual object, hence any independent set of $\Gamma$ can also be seen as an independent set of the Kneser graph of the same type in the dual space of $\PG(6,q)$. To simplify notation, we also denote a flag $\{E,S\}$ of type $\{2,3\}$ by $(E,S)$ where $E$ is a plane and $S$ is a solid. We first provide some examples.

\begin{Example}\label{example_general}
	For a hyperplane $H$ of $\PG(6,q)$ and a maximal set $\E$ of mutually intersecting planes of $H$, we denote by $\Lambda(H,\E)$ the set of all flags $(E,S)$ of type $\{2,3\}$ of $\PG(6,q)$ that satisfy $S\subseteq H$ or $E\in\E$. Dually, for a point $P$ of $\PG(6,q)$ and a maximal set $\S$ of $3$-dimensional subspaces on $P$ any two of which share at least a line, denote by $\Lambda(P,\S)$, the set of all flags $(E,S)$ of type $\{2,3\}$ of $\PG(6,q)$ that satisfy $P\in E$ or $S\in\S$.
\end{Example}

Indeed, the following special case of this example was already covered in a more general setting in \cite{Blokhuis_Brouwer}.

\begin{Example}\label{example_largest}
	For an incident point-hyperplane pair $(P,H)$ of $\PG(6,q)$, let $\Lambda(P,H)$ be the set of all flags $(E,S)$ of type $\{2,3\}$ that satisfy $P\in E$ or $P\in S\subseteq H$ and let $\Lambda(H,P)$ be the set of all flags $(E,S)$ of type $\{2,3\}$ that satisfy $S\subseteq H$ or $P\in E\subseteq H$.\\
	For an incident point-line pair $(P,l)$ of $\PG(6,q)$, let $\Lambda(P,l)$ be the set of all flags $(E,S)$ of type $\{2,3\}$ that satisfy $P\in E$ or $l\subseteq S$.\\
	For an incident pair $(U,H)$ of a $4$-dimensional space $U$ and a hyperplane $H$ of $\PG(6,q)$, let $\Lambda(H,U)$ be the set of all flags $(E,S)$ of type $\{2,3\}$ that satisfy $S\subseteq H$ or $E\subseteq U$.
\end{Example}

We shall show in Proposition \ref{PropositionOnHE} that the sets described in Example \ref{example_general} are maximal independent sets in the Kneser graph of flags of type $\{2,3\}$ in $\PG(6,q)$. Notice that the condition on $\E$ means that $\E$ is an independent set of the Kneser graph of planes of $H\simeq\PG(5,q)$ and that the condition on $\S$ means that $\S$ is an independent set of the Kneser graph of solids on $PG(6,q)$.

The sets constructed in Example \ref{example_largest} are special cases of the ones in Example \ref{example_general} and hence also maximal independent sets. Notice that the first and second examples as well as the third and forth examples in Example \ref{example_largest} are dual to each other.

In order to state our first theorem, we need the Gaussian coefficients \gauss{n}{k} which will be defined in Section 2.

\begin{Theorem}\label{main1}
	For $q>27$, the independence number of the Kneser graph of flags of type $\{2,3\}$ of $\PG(6,q)$ is
	\begin{align*}
		\gauss{6}{4}\cdot \gauss{4}{3}+\gauss{5}{3}\cdot q^3
	\end{align*}
	and the independent sets attaining this bound are precisely the four sets described in Example \ref{example_largest}.
\end{Theorem}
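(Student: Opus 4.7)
The plan is to combine a direct size computation for the four sets of Example \ref{example_largest} with the classification of large maximal independent sets developed later in the paper, and then to use the $q$-analogue of the Erd\H{o}s--Ko--Rado theorem to single out the extremal examples among the sets of Example \ref{example_general}.

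I would begin with the size computation. Since the flag type $\{2,3\}$ of $\PG(6,q)$ is self-dual, the duality of the ambient projective space interchanges $\Lambda(P,H)$ with $\Lambda(H,P)$ and $\Lambda(P,l)$ with $\Lambda(H,U)$, so it suffices to compute $|\Lambda(H,P)|$ and $|\Lambda(H,U)|$. In both sets the flags with $S\subseteq H$ contribute $\gauss{6}{4}\gauss{4}{3}$ (choose the solid $S$ in the hyperplane $H$ and then a plane $E\subseteq S$). For $\Lambda(H,P)$ the remaining flags have $E$ a plane of $H$ through $P$ and $S\not\subseteq H$; there are $\gauss{5}{2}=\gauss{5}{3}$ such planes and, for each of them, exactly $\gauss{4}{1}-\gauss{3}{1}=q^3$ solids $S\supseteq E$ not contained in $H$. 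For $\Lambda(H,U)$ the remaining flags have $E\subseteq U$ (giving $\gauss{5}{3}$ planes, all automatically contained in $H$) together with the same count $q^3$ of admissible solids. Each of the four sets therefore has the claimed size $\gauss{6}{4}\gauss{4}{3}+\gauss{5}{3}q^3$.

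Next I would invoke the classification of large maximal independent sets announced in the abstract, namely that every maximal independent set of size at least $q^{11}$ is of the form $\Lambda(H,\E)$ or $\Lambda(P,\S)$ described in Example \ref{example_general}. The counting above yields $|\Lambda(H,\E)|=\gauss{6}{4}\gauss{4}{3}+|\E|q^3$, since every plane of $\E$ lies in $H$ by assumption and contributes precisely $q^3$ solids outside of $H$. The problem thus reduces to finding the largest intersecting family $\E$ of planes in $H\cong\PG(5,q)$. By the $q$-EKR theorem for $3$-subspaces of a $6$-dimensional vector space (the equality case $n=2k$), this maximum equals $\gauss{5}{3}$, and it is attained exactly by the stars (all planes of $H$ through a fixed point $P\in H$) and by the hyperplane-sections (all planes of $H$ contained in a fixed $4$-dimensional subspace $U\subseteq H$). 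These are exactly the sets $\Lambda(H,P)$ and $\Lambda(H,U)$ of Example \ref{example_largest}. The dual analysis of $\Lambda(P,\S)$ produces $\Lambda(P,H)$ and $\Lambda(P,l)$, and the four examples together exhaust the extremal independent sets.

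The main obstacle is the invoked classification of large maximal independent sets; this is where the bulk of the paper's combinatorial work lies and where the hypothesis $q>27$ enters. Once that classification is available, the remainder of the proof reduces to the two direct cardinality identities together with the $q$-EKR theorem and a comparison of the resulting extremal configurations, and is essentially a bookkeeping argument.
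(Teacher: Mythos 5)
Your proposal is correct and follows essentially the same route as the paper: the authors deduce Theorem \ref{main1} from Corollary \ref{Cor_main} (the classification of large maximal independent sets, itself a consequence of Theorem \ref{main2}) together with Proposition \ref{PropositionOnHE}~\ref{A4D056CBFFA1708935F551E64B54A96C}, which carries out exactly your cardinality computation $|\Lambda(H,\E)|=s(3,5)\cdot s(3)+|\E|\cdot q^3$ and invokes the Frankl--Wilson $q$-EKR theorem (with its $n=2k$ equality case) to identify the extremal families $\E$ as point-stars and $4$-space sections. Your duality reduction and the identification of the four extremal configurations with Example \ref{example_largest} match the paper's argument.
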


The independence number is of order $q^{11}$. As our proof of this theorem is geometric it also provides a stability result for independence sets. Essentially it says that, for large values of prime powers $q$, Example \ref{example_general} describes all maximal independent sets with at least $27q^{10}$ elements. A precise formulation is given in Theorem \ref{main2}.

\begin{Remarks}
\begin{enumerate}
\item
	For a Hilton-Milner type result for the Kneser graph of plane-solid flags in $\PG(6,q)$ one needs to determine the second largest cardinality of a maximal set $\E$ of mutually intersecting planes of $\PG(5,q)$ and use this in Example \ref{example_general}. However, this number is unknown as it is one of the cases that is not covered by the Hilton-Milner type theorem in \cite{BlokhuisBrouwerETC_HiltonMilnerInVectorSpaces}.
\item
	Every upper bound $b$ for the independence number of a graph with $n$ vertices leads to the lower bound $\chi\ge n/b$ for its chromatic number $\chi$. In our situation this shows that the chromatic number of the Kneser graph of plane-solid flags of $\PG(6,q)$, $q>27$, has chromatic number at least $q^4-q^2+2q+1$. On the other hand, if $U$ is a $4$-space, then the sets $\Lambda(P,\emptyset)$ with $P\in U$ are independent sets whose union covers every vertex, so the chromatic number is at most $q^4+q^3+q^2+q+1$. Using independent sets of the form $\Lambda(P,l)$, we show that this upper bound can be improved.
\item
	We keep all estimations in this paper as easy as possible. A more detailed approach, especially in Lemma \ref{A0B3BAD29C13B92BD5BADB679BB91E4B}, shows that Theorem \ref{main1} holds also for $q=27$. This will appear in the Ph.D. thesis of the second author.
\end{enumerate}
\end{Remarks}
\end{section}

\begin{section}{Preliminaries}
Let $q$ be a prime power and $F_q$ the finite field of order $q$. For integers $n$ and $d$ we define the \highlight{Gaussian coefficient}
\begin{align*}
	\gauss{n}{d}:=\begin{cases}
		\prod_{i=1}^{d}\frac{q^{n+1-i}-1}{q^i-1} &\text{if $0\le d\le n$,}\\
		0 &\text{otherwise.}
	\end{cases}
\end{align*}
For $n\ge 0$ this is the number of $d$-dimensional subspaces of an $F_q$-vector space of dimension $n$. If $0\le d\le n$, and if $D$ is a $d$-dimensional subspace of an $n$-dimensional $F_q$ vector space $V$ then $D$ has exactly $q^{d(n-d)}$ complements in $V$. These two facts can be found in Section 3.1 of \cite{Hirschfeld}. We define
\begin{align*}
	s_q(l,k,d,n):=q^{(l+1)(d-k)}\cdot\gauss{n-k-l-1}{d-k}.
\end{align*}
We also set $s_q(k,d,n):=s_q(-1,k,d,n)$, $s_q(k,n):=s_q(-1,d,n)$ and $s_q(n):=s_q(0,n)$ and omit the subscript $q$ in the following.

\begin{Lemma}
	Given two skew subspaces in $\PG(n,q)$ of dimensions $k$ and $l$ respectively and any integer $d$ the number of $d$-subspaces of $\PG(n,q)$ that contain the $k$-subspace and are skew to the $l$-subspace is $s(l,k,d,n)$.
\end{Lemma}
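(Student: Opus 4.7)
The plan is to reduce the count to a simpler enumeration in a quotient space and then to handle that enumeration by a classical graph-of-linear-map argument. Let $K$ and $L$ denote the given $k$- and $l$-dimensional subspaces, and work projectively throughout.

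First I would pass to the quotient $\pi\colon\PG(n,q)\to\PG(n,q)/K\cong\PG(n-k-1,q)$. Since $K$ and $L$ are skew, $\langle K,L\rangle$ has projective dimension $k+l+1$, so its image $\tilde L:=\pi(\langle K,L\rangle)$ has projective dimension $l$ in the quotient. The map $\pi$ sets up a bijection between $d$-subspaces $D$ of $\PG(n,q)$ that contain $K$ and $(d-k-1)$-subspaces $\tilde D$ of $\PG(n-k-1,q)$, and the skew condition translates cleanly: $D\cap L=\emptyset$ if and only if $\tilde D\cap\tilde L$ is the empty projective subspace of the quotient, which is the statement that $\tilde D$ and $\tilde L$ are skew. (A set-theoretic $D\cap L$ trivial forces $\tilde D\cap \tilde L$ trivial because any common point of $\tilde D$ and $\tilde L$ would lift to a subspace of $D\cap\langle K,L\rangle$ strictly larger than $K$, forcing a point of $D$ in $L$ by a dimension argument on the complement of $K$ in $\langle K,L\rangle$.) So it suffices to count $e$-subspaces of $\PG(m,q)$ that are skew to a fixed $l$-subspace $\tilde L$, where $m:=n-k-1$ and $e:=d-k-1$.

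For that count I would pick a fixed complement $C$ of $\tilde L$, which has projective dimension $m-l-1$, and then observe that every $e$-subspace skew to $\tilde L$ is the projective image of the ``graph'' of a unique linear map from an $(e+1)$-dimensional vector subspace of the vector lift of $C$ to the vector lift of $\tilde L$. Choosing the $(e+1)$-dimensional source subspace of $C$ contributes $\gauss{m-l}{e+1}$ possibilities, and each such source admits $q^{(e+1)(l+1)}$ linear maps into $\tilde L$, all of which give distinct $e$-subspaces skew to $\tilde L$; conversely every skew $e$-subspace arises this way by projecting along $\tilde L$. This yields the count $q^{(e+1)(l+1)}\gauss{m-l}{e+1}$.

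Substituting back $m-l=n-k-l-1$ and $e+1=d-k$, this becomes $q^{(d-k)(l+1)}\gauss{n-k-l-1}{d-k}=s(l,k,d,n)$, as claimed. The main thing to get right is just the bookkeeping of projective versus vector dimensions when passing to and from the quotient, and the verification that the graph-of-linear-map description gives a bijection onto the skew $e$-subspaces; once those are in place the formula falls out immediately, and no case distinction is needed since the Gaussian coefficient returns $0$ outside the range where the parameters make geometric sense.
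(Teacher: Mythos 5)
Your proof is correct. It opens exactly as the paper does, by passing to the quotient by the $k$-subspace (the paper phrases this in the underlying vector space $V$, quotienting by $K$ and by $K+L$), and your translation of the skewness condition to the quotient is sound. Where you diverge is in how the reduced count of $e$-subspaces skew to $\tilde L$ is organized: the paper fibres the $d$-subspaces $D$ over their span $U=\langle D,L\rangle$, counts the possible $U$ via $\gauss{n-k-l-1}{d-k}$ in $V/(K+L)$, and then counts the fibre as the set of complements of $L$ in $U/K$ (quoting the standard $q^{t(s-t)}$ complement count), whereas you fix one complement $C$ of $\tilde L$ once and for all and parametrize the skew subspaces as graphs of linear maps from an $(e+1)$-dimensional subspace $W$ of $C$ into $\tilde L$. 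The two fibrations are genuinely different (grouping by $D+L$ versus grouping by the projection onto $C$), but they produce the same two factors, and your graph-of-linear-map bijection is in effect a proof of the complement-counting fact that the paper cites from Hirschfeld; so your version is marginally more self-contained, at the cost of having to verify the bijection, while the paper's is marginally shorter because it can quote the complement count. Your closing remark about degenerate parameter ranges being absorbed by the convention $\gauss{n}{d}=0$ is also consistent with the paper's definition of the Gaussian coefficient.
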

\begin{proof}
	We prove this for the underlying $F_q$-vector space $V$ of dimension $n+1$ and two skew subspaces $K$ and $L$ of dimension $k+1$ and $l+1$ respectively, where we have to count the number of subspaces $D$ of dimension $d+1$ that contain $K$ and are skew to $L$. Every such subspace $D$ gives rise to a subspace $D+L$ of dimension $d+l+2$ of $V$. Going to the factor space $V/(K+L)$, we see that $V$ has $\gauss{n-k-l-1}{d-k}$ subspaces $U$ of dimension $d+l+2$ that contain $K+L$. For such a subspace $U$ we see in the quotient space $U/K$ that $U$ has $q^{(d-k)(l+1)}$ subspaces $D$ of dimension $d+1$ with $U=L+D$.
\end{proof}

\begin{Lemma}\label{B78EBDD44D76A142EC1D2FE7B90B3BAF}
	If $n\ge 5$ and if $\E$ is a set of planes of $\PG(n,q)$ such that any two distinct planes of $\E$ meet in a line, then $|\E|\le s(n-2)$.
\end{Lemma}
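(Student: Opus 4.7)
The plan is to prove the standard dichotomy: either all planes of $\E$ share a common line, or all of them are contained in a common solid (3-dimensional subspace). Fix distinct $E_1,E_2\in\E$ and set $L:=E_1\cap E_2$, which is a line by hypothesis. If every plane of $\E$ contains $L$, then the planes correspond bijectively to distinct points of the quotient $\PG(n,q)/L\simeq\PG(n-2,q)$, yielding $|\E|\le s(n-2)$ immediately.

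Otherwise pick $E_3\in\E$ with $L\not\subseteq E_3$, set $L':=E_1\cap E_3$, and put $U:=\langle E_1,E_3\rangle$. Since $L'$ is a line and $L'\ne L$ (because $L\not\subseteq E_3\supseteq L'$), we have $\dim U=2+2-1=3$, so $U$ is a solid. I would first verify $E_2\subseteq U$: the line $L_{23}:=E_2\cap E_3$ satisfies $L_{23}\ne L$ (otherwise $L\subseteq E_3$), and as $L,L_{23}$ are two distinct lines of the plane $E_2$ they span it, giving $E_2=\langle L,L_{23}\rangle\subseteq\langle E_1,E_3\rangle=U$.

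Now take an arbitrary $E\in\E\setminus\{E_1,E_3\}$ and set $L_i:=E\cap E_i$ for $i=1,2,3$. If $L_1\ne L_3$, then $L_1,L_3$ are two distinct lines in the plane $E$, so $E=\langle L_1,L_3\rangle\subseteq\langle E_1,E_3\rangle=U$. In the degenerate sub-case $L_1=L_3$, both are contained in $E_1\cap E_3=L'$ and hence equal $L'$; I then check that $L_2\ne L'$, since $L_2=L'$ would force $L'\subseteq E_2\cap E_3=L_{23}$, hence $L'=L_{23}$, so $E_2=\langle L,L_{23}\rangle=\langle L,L'\rangle=E_1$ (as $L,L'$ are distinct lines of $E_1$), a contradiction. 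Because $L_2\subseteq E_2\subseteq U$ and $L'\subseteq U$, again $E=\langle L',L_2\rangle\subseteq U$.

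Hence every plane of $\E$ lies in $U\cong\PG(3,q)$, so $|\E|\le\gauss{4}{3}=s(3)$, and $s(3)\le s(n-2)$ since $n\ge 5$. The only delicate point of the proof is precisely the degenerate sub-case $L_1=L_3=L'$, where one must rule out $L_2=L'$ to recover a two-line spanning presentation of $E$ inside $U$; the rest is essentially dimension counting.
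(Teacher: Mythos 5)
Your proof is correct and follows the same strategy as the paper's: either all planes share a common line (giving $s(1,2,n)=s(n-2)$), or three suitably chosen planes force every member of $\E$ into a common solid, where the count $s(3)\le s(n-2)$ for $n\ge5$ finishes the argument. The paper dispatches your ``degenerate sub-case'' slightly more slickly by noting that at least two of the three lines $E\cap E_1$, $E\cap E_2$, $E\cap E_3$ must be distinct, but the substance is identical.
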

\begin{proof}
	If there exists a line contained in all planes of $\E$, then $|\E|\le s(1,2,n)=s(n-2)$. Otherwise
	there exist planes $E_1,E_2,E_3\in\E$ such that $E_1\cap E_3$ and $E_2\cap E_3$ are distinct lines, which implies that $E_3$ is contained in the 3-space $U:=\langle E_1,E_2\rangle$. In this case, for every further plane $E$ of $\E$ at least two of the lines $E\cap E_1$, $E\cap E_2$ and $E\cap E_3$ are distinct, so $E$ is contained in $U$. Thus, in this case, every plane of $\E$ is one of the $s(2,3)$ planes of $U$.
\end{proof}

The following result has been proven in Theorem 1.4 of \cite{BlokhuisBrouwerETC_HiltonMilnerInVectorSpaces}, where it was formulated in its dual version.

\begin{Result}\label{0950C37D657C2988FA3944093CDE8FD6}
	For $q\ge3$ the independence number $\alpha(\Gamma)$ of the Kneser graph $\Gamma$ of type 3 in $\PG(6,q)$ is given by
	\begin{align*}
		\alpha(\Gamma)=s(3,5)=q^8+q^7+2q^6+2q^5+3q^4+2q^3+2q^2+q+1.
	\end{align*}
	For each hyperplane $H$ of $\PG(6,q)$ the set consisting of all solids of $H$ is an independent set of $\Gamma$ with $\alpha(\Gamma)$ vertices. Every other maximal independent set has cardinality at most $q^6+2q^5+3q^4+3q^3+2q^2+q+1$.
\end{Result}
\end{section}

\begin{section}{Sets of flags of type $\{2,3\}$}
In this section we study sets of flags of type $\{2,3\}$ of $\PG(6,q)$. Recall that we also denote a flag $\{E,S\}$ of type $\{2,3\}$ as the ordered pair $(E,S)$ where $E$ is the plane and $S$ the solid of the flag. Note that two distinct such flags $(E,S)$ and $(E',S')$ are adjacent in $\Gamma$ if and only if $E\cap S'=\emptyset=E'\cap S$. Let $\pi_2$ and $\pi_3$ be the maps from the set of all flags of type $\{2,3\}$ to the set of subspaces of $\PG(6,q)$ with $\pi_2(f):=E$ and $\pi_3(f):=S$ for all flags $f=(E,S)$ of type $\{2,3\}$. For any set $C$ of such flags, we define $\pi_i(C):=\{\pi_i(f):f\in C\}$, $i=2,3$.

\begin{Lemma}\label{EisHcapS}
	Let $\Gamma$ be the Kneser graph of flags of type $\{2,3\}$ of $\PG(6,q)$, let $C$ be an independent set of $\Gamma$, let $H$ be a hyperplane and let $P$ be a point of $\PG(6,q)$.
	\begin{enumerate}[label=\roman*)]
	\item
		Let $\E$ be the set whose elements are the planes $E$ of $H$ for which there exists a solid $S$ with $(E,S)\in C$ and $E=H\cap S$. Then $E\cap E'\neq\emptyset$ for all $E,E'\in\E$, that is, $\E$ is an independent set of the Kneser graph of planes of $H$. Hence $|\E|\le s(1,4)$.
	\item
		Let $\S$ be the set whose elements are the solids $S$ for which there exists a flag $(E,S)\in C$ with $P\in S\setminus E$. Then $|S|\le s(1,4)$.\label{EisHcapSii}
	\end{enumerate}
\end{Lemma}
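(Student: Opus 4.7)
The plan is, in each part, to reduce to the Erdős–Ko–Rado theorem for planes of $\PG(5,q)$: a pairwise intersecting family of planes of $\PG(5,q)$ has size at most $\gauss{5}{2}=s(1,4)$, with equality realised by the family of all planes through a fixed point. In (i) this reduction will be carried out inside $H\simeq\PG(5,q)$, while in (ii) it will be carried out after quotienting $\PG(6,q)$ by the point $P$.

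For (i) I would pick two distinct planes $E,E'\in\E$ together with witnessing solids $S,S'$, so that $(E,S),(E',S')\in C$, $E=H\cap S$ and $E'=H\cap S'$. Since $E\neq E'$ the two flags are distinct, so independence of $C$ forces $E\cap S'\neq\emptyset$ or $E'\cap S\neq\emptyset$. Using $E\subseteq H$ in the first case, any point of $E\cap S'$ automatically lies in $H\cap S'=E'$, so $E\cap E'\supseteq E\cap S'\neq\emptyset$; the second case is symmetric. Thus $\E$ is a pairwise intersecting family of planes of $H$, and the EKR bound gives $|\E|\le s(1,4)$.

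For (ii) every solid of $\S$ contains $P$, so I would pass to the quotient $\PG(6,q)/P\simeq\PG(5,q)$, in which each solid of $\S$ becomes a plane. Take distinct $S,S'\in\S$ and fix flags $(E,S),(E',S')\in C$ with $P\in S\setminus E$ and $P\in S'\setminus E'$. The flags are distinct, so independence of $C$ produces a point $Q$ in $E\cap S'$ or in $E'\cap S$. If $Q\in E\cap S'$, then $Q\in E\subseteq S$ and $Q\in S'$, so $Q\in S\cap S'$; the hypothesis $P\notin E$ forces $Q\neq P$, and hence the line $PQ$ lies in $S\cap S'$. The other case is symmetric. Consequently the images of the members of $\S$ in the quotient form a pairwise intersecting family of planes of $\PG(5,q)$, and EKR again gives $|\S|\le s(1,4)$.

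The step that needs most care is the quotient argument in (ii). From $P\in S\cap S'$ alone the images in $\PG(6,q)/P$ could be disjoint planes and EKR would be useless; one really needs $S\cap S'$ to contain a line through $P$. The extra hypothesis $P\in S\setminus E$ (and its counterpart for $S'$) is precisely what upgrades the non-adjacency condition into such a genuine line in $S\cap S'$: any witness point of non-adjacency lies in $E$ or $E'$ and therefore cannot coincide with $P$. With this observation in hand, both parts become short consequences of EKR for planes in $\PG(5,q)$.
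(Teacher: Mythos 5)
Your proof is correct and follows essentially the same route as the paper: part (i) is the identical observation that $E\cap S'=E\cap E'$ (and symmetrically) followed by the Frankl--Wilson EKR bound for planes of $\PG(5,q)$. The only cosmetic difference is in part (ii), where the paper simply invokes duality with (i) while you unfold that duality into an explicit quotient-by-$P$ argument; the content is the same.
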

\begin{proof}
	\begin{enumerate}[label=\roman*)]
	\item\label{BF1A8372A6B28D3FF847C04C9BF3F4E1}
		For $E,E'\in\E$ let $(E,S)$ and $(E',S')$ be flags of $C$ with $S\cap H=E$ and $S'\cap H=E'$. Then $S'\cap E=E'\cap E$ and $S\cap E'=E'\cap E$. Since $C$ is independent, it follows that $E\cap E'\not=\emptyset$. Thus $\E$ is an independent set of the Kneser graph of planes of $H$. The main result of \cite{Frankl_Wilson_EKR_VectorSpaces} implies that $|\E|\le s(2,4)=s(1,4)$.
	\item
		This is a special case of the dual statement of \ref{BF1A8372A6B28D3FF847C04C9BF3F4E1}.\qedhere
	\end{enumerate}
\end{proof}

In the following proposition we investigate the sets constructed in Example \ref{example_general} up to duality.

\begin{Proposition}\label{PropositionOnHE}
	Let $H$ be a hyperplane of $\PG(6,q)$ and let $\Gamma$ be the Kneser graph of flags of type $\{2,3\}$ of $\PG(6,q)$.
	\begin{enumerate}[label=\roman*)]
	\item
		$\Lambda(H,\emptyset)$ is an independent set of $\Gamma$.
	\item
		The maximal independent sets of $\Gamma$ that contain $\Lambda(H,\emptyset)$ are the sets $\Lambda(H,\E)$ for maximal independent sets $\E$ of planes of $H$.
	\item\label{A4D056CBFFA1708935F551E64B54A96C}
		For every maximal independent set $\E$ of the Kneser graph of planes of $H$ we have
		\begin{multline*}
			|\Lambda(H,\E)|=s(3,5)\cdot s(3)+|\E|\cdot q^3\\
				\le q^{11}+2q^{10}+5q^9+7q^8+10q^7+11q^6+11q^5+9q^4+7q^3+4q^2+2q+1
		\end{multline*}
		If equality holds, that is if $|\E|=s(1,4)$, then either there exists a point $P$ in $H$ such that $\E$ consists of all planes of $H$ that contain $P$, or there exists a $4$-dimensional subspace of $H$ such that $\E$ consists of all planes of this $4$-dimensional subspace.
	\end{enumerate}
\end{Proposition}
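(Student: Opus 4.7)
My plan is as follows. For part (i), I would argue directly by the dimension formula: for any two flags $(E,S),(E',S')$ with $S,S'\subseteq H$, both $E$ and $S'$ lie in the hyperplane $H\cong\PG(5,q)$, so $\dim(E\cap S')\ge 2+3-5=0$ and the two flags are not adjacent. The first half of part (ii)---that $\Lambda(H,\E)$ is independent whenever $\E$ is a maximal intersecting family of planes of $H$---then reduces to a case split on how each flag enters $\Lambda(H,\E)$: the case $S,S'\subseteq H$ is (i); the mixed case (one $S\subseteq H$, the other with $E'\in\E$) reuses the same $H$-internal dimension formula on the pair $(E',S)$; and the case $E,E'\in\E$ is immediate from $\emptyset\neq E\cap E'\subseteq S\cap E'$.

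For the maximality of $\Lambda(H,\E)$, I would take $f_0=(E_0,S_0)\notin\Lambda(H,\E)$ (so $S_0\not\subseteq H$ and $E_0\notin\E$) and split on whether $E_0\subseteq H$. If yes, then $E_0=S_0\cap H$; the maximality of $\E$ yields some $E_1\in\E$ disjoint from $E_0$, and inside the quotient $\PG(6,q)/E_1\cong\PG(3,q)$ a solid $S_1\supseteq E_1$ corresponds to a point, which I choose off the plane-image of $E_0$, yielding $(E_1,S_1)\in\Lambda(H,\E)$ adjacent to $f_0$. If $E_0\not\subseteq H$, then the line $\ell:=E_0\cap H$ lies in the plane $\pi:=S_0\cap H$, and I would produce an adjacent witness inside $\Lambda(H,\emptyset)$ itself by picking a solid $S_1\subseteq H$ skew to $\ell$ with $S_1\cap\pi$ a single point $P_0\notin\ell$ (abundant by dimension counts in $\PG(5,q)$), then a plane $E_1\subseteq S_1$ missing $P_0$. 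For the converse characterisation, given a maximal independent $C\supseteq\Lambda(H,\emptyset)$, I would set $\E:=\{E:(E,S)\in C\text{ for some }S\not\subseteq H\}$: the argument just given (applied in reverse) shows each such $E$ lies in $H$, forcing $E=S\cap H$; pairwise non-adjacency of two such flags of $C$ then reads $E\cap E'\neq\emptyset$, so $\E$ is intersecting; and the maximality of $C$ simultaneously forces the maximality of $\E$ in $H$ (else a plane $E'\subseteq H$ outside $\E$ meeting every $E\in\E$ extends $C$ via some $(E',S')$ with $S'\cap H=E'$) and the equality $C=\Lambda(H,\E)$ (each flag $(E,S)$ with $E\in\E$ is non-adjacent to every flag of $C$, hence belongs to $C$).

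For part (iii), I would compute $|\Lambda(H,\E)|$ by inclusion-exclusion: $s(3,5)\cdot s(3)$ flags with $S\subseteq H$, plus $|\E|\cdot\gauss{4}{1}$ flags with $E\in\E$, minus the $|\E|\cdot(q^2+q+1)$ flags of both types, giving $s(3,5)\cdot s(3)+|\E|\cdot q^3$. The upper bound $|\E|\le s(1,4)$ is exactly Lemma~\ref{EisHcapS}\,(i). For the equality case I would invoke the classification of maximum intersecting families of $3$-dimensional subspaces of a $6$-dimensional $\FF_q$-vector space (the self-dual $q$-EKR setting): the extremal families are precisely the stars (all planes of $H$ through a fixed point) and the co-stars (all planes of $H$ inside a fixed $4$-dimensional subspace). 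The explicit polynomial upper bound then follows by substituting $|\E|=\gauss{5}{2}$ and expanding. The main obstacle in the plan is the $E_0\not\subseteq H$ sub-case of the maximality argument in (ii), where the forced containment $\ell\subseteq\pi$ requires a careful choice of $S_1$; everything else is routine dimension bookkeeping or a direct appeal to a cited result.
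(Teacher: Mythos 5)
Your proposal is correct and follows essentially the same route as the paper: part (i) is the dimension formula in $H$, the key geometric step in (ii) (a solid of $H$ skew to the line $E_0\cap H$, meeting the plane $S_0\cap H$ in a single point, and a plane of that solid avoiding this point) is exactly the paper's construction, and (iii) rests on the same count ($q^3$ solids $S$ with $S\cap H=E$ per plane $E$ of $H$) and the same appeal to the Frankl--Wilson EKR theorem with its equality characterisation. The only difference is organisational: you establish maximality of $\Lambda(H,\E)$ directly by exhibiting an adjacent witness for every outside flag (including the extra sub-case $E_0\subseteq H$, $E_0\notin\E$ via a quotient-space choice of $S_1$), whereas the paper folds this into the statement that every independent set containing $\Lambda(H,\emptyset)$ is contained in some $\Lambda(H,\E)$.
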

\begin{proof}
	\begin{enumerate}[label=\roman*)]
	\item
		This follows from the fact that every solid of $H$ meets every plane of $H$.
	\item
		If $\E$ is an independent set of planes in the Kneser graph of planes of $H$, then every solid of $H$ meets every plane of $\E$ non-trivially and every two planes of $\E$ meet non-trivially. Therefore $\Lambda(H,\E)$ is an independent set of $\Gamma$. In order to prove the assertion, it therefore suffices to consider an independent set $C$ of $\Gamma$ with $\Lambda(H,\emptyset)\subseteq C$ and to show that $C$ is contained in $\Lambda(H,\E)$ for a set $\E$ of mutually intersecting planes of $H$.

		Let $C$ be an independent set with $\Lambda(H,\emptyset)\subseteq C$. Let $\E$ be the set of all planes $E$ of $H$ such that $C$ contains a flag $(E,S)$ with $E=S\cap H$. Lemma \ref{EisHcapS} shows that the planes of $\E$ are mutually intersecting. It remains to show that $C\subseteq\Lambda(H,\E)$. Suppose on the contrary that there exists a flag $(E,S)\in C$ with $S\not\subseteq H$ and $H\cap S\not=E$. Then $S\cap H$ is a plane and $E\cap H$ is a line of this plane and $H$ contains a solid $S'$ that is skew to the line $E\cap H$. This implies that $S'$ meets the plane $S\cap H$ in a point and therefore $S'$ contains a plane $E'$ with $E'\cap S\cap H=\emptyset$. Then $(E',S')\in\Lambda(H,\emptyset)\subseteq C$ with $S'\cap E=\emptyset=S\cap E'$ and since $C$ is independent this is a contradiction.
	\item
		Since $H$ contains $s(3,5)$ solids all of which contain $s(2,3)=s(3)$ planes, we have $|\Lambda(H,\emptyset)|=s(3,5)\cdot s(3)$. Every plane $E$ of $H$ lies on $s(2,3,6)-s(2,3,5)=q^3$ solids $S$ with $S\cap H=E$. Hence $|\Lambda(H,\E)|=|\Lambda(H,\emptyset)|+|\E|\cdot q^3$. A special case of a result of \cite{Frankl_Wilson_EKR_VectorSpaces} shows that $|\E|\le s(2,4)=s(1,4)$ with equality if and only if all planes of $\E$ contain a common point of $H$ or lie in a common $4$-subspace of $H$.\qedhere
	\end{enumerate}
\end{proof}

\begin{Lemma}\label{F3289245C0ECD376C35648C3B4FAB775}
	Let $\C$ be an independent set of the Kneser graph of type $\{2,3\}$ in $\PG(6,q)$ and let $\xi\in\NN$ be such that every solid of $\PG(6,q)$ occurs in at most $\xi$ flags of $C$. Let $(E,S)$ be an element of $C$. Then there are at most
	\begin{align*}
		s(2)\cdot s(1,4)\cdot\xi=(q^8+2q^7+4q^6+5q^5+6q^4+5q^3+4q^2+2q+1)\cdot\xi
	\end{align*}
	flags $(E',S')\in\C$ with $E'\cap E=\emptyset$ and $S'\cap E\neq\emptyset$.
\end{Lemma}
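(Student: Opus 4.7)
The plan is to force $S'\cap E$ to be a single point of $E$ by a dimension argument, and then to apply part (ii) of Lemma~\ref{EisHcapS} at that point together with the hypothesis on $\xi$.

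First I show that for any $(E',S')\in\C$ with $E'\cap E=\emptyset$ and $S'\cap E\neq\emptyset$, the intersection $S'\cap E$ is a single point. One cannot have $E\subseteq S'$, since otherwise the two skew planes $E,E'$ would both lie in the solid $S'$, contradicting $\dim\langle E,E'\rangle=5>3$. Nor can $S'\cap E$ be a line $\ell$: inside the solid $S'$, the plane $E'$ and the line $\ell$ meet by the dimension formula, which would force $E'\cap E\supseteq E'\cap\ell\neq\emptyset$. Hence $S'\cap E=\{P\}$ for a unique $P\in E$, and $P\notin E'$ because $E\cap E'=\emptyset$.

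Next I partition the flags to be counted by this point $P$. For each fixed $P\in E$, every flag $(E',S')$ contributing to the count satisfies $P\in S'\setminus E'$, so its solid $S'$ lies in the set of solids described by Lemma~\ref{EisHcapS}(ii) applied to the independent set $\C$ and the point $P$. That lemma yields at most $s(1,4)$ such solids, and by hypothesis each of them occurs in at most $\xi$ flags of $\C$. Hence there are at most $s(1,4)\cdot\xi$ target flags per point $P$.

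Summing over the $s(2)=q^2+q+1$ points of $E$ (each target flag contributes to exactly one $P$, namely $P=S'\cap E$) yields the claimed bound $s(2)\cdot s(1,4)\cdot\xi$, which expands to the polynomial in the statement. The only delicate step is the dimension analysis in the first paragraph; once $S'\cap E$ is known to be a single point, the counting reduces to a direct application of Lemma~\ref{EisHcapS}(ii) together with the $\xi$-hypothesis.
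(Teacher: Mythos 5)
Your proof is correct and follows essentially the same route as the paper: show that $S'\cap E$ is a single point not in $E'$, then for each of the $s(2)$ points of $E$ apply Lemma~\ref{EisHcapS}~\ref{EisHcapSii} to bound the number of admissible solids by $s(1,4)$ and multiply by $\xi$. Your explicit dimension analysis in the first paragraph is in fact slightly more careful than the paper's one-line justification of the same fact.
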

\begin{proof}
	Since $C$ is independent, every flag $(E',S')\in\C$ with $E'\cap E=\emptyset$ and $S'\cap E\neq\emptyset$ has the property that $S'\cap E$ is a point $P$ with $P\notin E'$. Hence for every such flag there exists a point $P$ in $E$ with $P\in S'\setminus E'$. Since $E$ has $s(2)$ points and since every solid occurs in at most $\xi$ flags of $C$ Lemma \ref{EisHcapS} \ref{EisHcapSii} proves the statement.
\end{proof}
\end{section}

We now proceed to prove our theorem in three steps, where we consider two special cases in the first two steps: In the first step we only consider independent sets $C$ in which no plane or solid occurs in more than $s(1)$ flags of $C$ and in the second step we consider independent sets $C$ in which no plane or solid occurs in more than $s(2)$ flags of $C$.

\begin{section}{The first special case}
In this section we consider an independent set $C$ of the Kneser graph of type $\{2,3\}$ in $\PG(6,q)$ that has the property that every plane and every solid of $\PG(6,q)$ occurs in at most $q+1$ flags of $C$. Our aim is to prove an upper bound for $|C|$. For every point $P$ we denote the set of all flags $(E,S)\in C$ with $P\in E$ by $\Delta_P(C)$.

\begin{Lemma}\label{D0DCC8ED0186F40E225BD9805289E7E5}
	Let $P_1$, $P_2$ and $P_3$ be non-collinear points of $\PG(6,q)$.
	\begin{enumerate}[label=\roman*)]
	\item\label{BE530922662C195360BFCCCDB690D132}
		If
		\begin{align}
			|\Delta_{P_1}(C)|>(q+1)(6q^6+10q^5+17q^4+15q^3+15q^2+9q+5),\label{EQ_9A14742F33B1E7C14895E6636D3AD281}
		\end{align}
		then there are flags $f_i=(E_i,S_i)\in C$ for $i\in\{1,2,3\}$ with $\dim(\langle E_1,E_2,E_3\rangle)\ge5$, $P_2,P_3\notin S_1,S_2,S_3$ as well as $E_i\cap E_j=P_1$ and $P_2,P_3\notin\langle E_i,E_j\rangle$ for all distinct $i,j\in\{1,2,3\}$.
	\item\label{2628B2CFFBCB7361074AC2EE8F596D65}
		If there are flags $f_1$, $f_2$ and $f_3$ with the properties stated in \ref{BE530922662C195360BFCCCDB690D132} and if
		\begin{align*}
			|\Delta_{P_2}(C)|>(q+1)(6q^6+10q^5+17q^4+18q^3+15q^2+9q+5),
		\end{align*}
		then there are flags $f_i'=(E_i',S_i')\in C$ for $i\in\{1,2,3\}$ with $\dim(\langle E_1',E_2',E_3'\rangle)\ge5$, $P_1,P_3\notin S_1',S_2',S_3'$, $\dim(S_i\cap S_j')\le 1$ for all $i,j\in\{1,2,3\}$ as well as $E_i'\cap E_j'=P_2$ and $P_1,P_3\notin\langle E_i',E_j'\rangle$ for all distinct $i,j\in\{1,2,3\}$.
	\end{enumerate}
\end{Lemma}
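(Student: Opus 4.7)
I pick $f_1,f_2,f_3$ (and similarly $f_1',f_2',f_3'$) one at a time, calling a flag $(E,S)\in\Delta_{P_1}(C)$ (resp.~$\Delta_{P_2}(C)$) \emph{bad at stage $k$} if selecting it as $f_k$ would violate a condition involving the flags already chosen. Each bad condition forces a constraint either on the plane $E$ or on the solid $S$; I enumerate the corresponding bad planes and solids and convert them into bad flags by applying the section-wide hypothesis that every plane and every solid occurs in at most $q+1$ flags of $C$. The stated hypothesis on $|\Delta_{P_1}(C)|$ (resp.~$|\Delta_{P_2}(C)|$) is then seen to exceed the total bad count, ensuring that a good flag remains at each stage.

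\textbf{Part (i).} The solid-type bad flags at every stage are those whose solid contains a line $\langle P_1,P_j\rangle$ with $j\in\{2,3\}$; there are $\gauss{5}{2}$ solids through each such line. The plane-type bad conditions, arising for $k=2,3$, are that $E_k\cap E_i\neq\{P_1\}$ or $P_j\in\langle E_i,E_k\rangle$ for some $i<k$ and $j\in\{2,3\}$. For the first, $E_k$ must share one of the $q+1$ lines of $E_i$ through $P_1$, giving at most $(q+1)(\gauss{5}{1}-1)$ bad planes per index $i$. For the second, in the main case $E_k\cap E_i=\{P_1\}$, the $4$-space $\langle E_i,E_k\rangle$ must contain the solid $\langle E_i,P_j\rangle$ (using $P_j\notin S_i\supseteq E_i$); there are $\gauss{3}{1}$ such $4$-spaces, and inside each of them a direct count shows that exactly $q^4$ planes through $P_1$ meet $E_i$ only at $P_1$. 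Finally, at stage~$3$, the condition $\dim\langle E_1,E_2,E_3\rangle\ge 5$ excludes the $\gauss{4}{2}$ planes through $P_1$ inside the $4$-space $\langle E_1,E_2\rangle$. Summing all these bad-subspace counts, each multiplied by $q+1$, yields a polynomial bound not exceeding the right-hand side of the hypothesis.

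\textbf{Part (ii).} I repeat the argument at $P_2$. The only additional family of bad flags consists of those whose solid $S'$ satisfies $\dim(S_i\cap S')\geq 2$ for some $i\in\{1,2,3\}$. Since $P_2\notin S_i$ (by part~(i)) but $P_2\in S'$, the intersection $S_i\cap S'$ is necessarily a plane $E\subseteq S_i$, which forces $S'=\langle E,P_2\rangle$; consequently each $S_i$ contributes at most $\gauss{4}{3}$ bad solids and hence at most $3(q+1)\gauss{4}{3}$ extra bad flags overall. This accounts precisely for the small jump from $15q^3$ to $18q^3$ in the threshold on $|\Delta_{P_2}(C)|$.

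\textbf{Main obstacle.} The delicate step is stage~$3$ of part~(i), where the conditions come in three flavours (pairwise trivial intersection, non-incidence of $P_j$ with pairwise spans, and the span-dimension condition) and must be combined without double-counting. The arithmetic is held together by the identity that, inside a $4$-space containing $E_i$ and the point $P_1\in E_i$, exactly $q^4$ of the planes through $P_1$ meet $E_i$ only at $P_1$; this is obtained by removing $E_i$ itself and the $q(q+1)^2$ planes meeting $E_i$ in a line from the $\gauss{4}{2}$ planes through $P_1$ in that $4$-space. This identity is what pins the leading coefficient of the polynomial bound at $6$ and explains the precise form of the threshold.
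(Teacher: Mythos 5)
Your proposal is correct and follows essentially the same greedy strategy as the paper's proof: choose the flags one at a time, bound the number of bad planes and bad solids at each stage, and convert to flags via the standing hypothesis that each subspace lies in at most $q+1$ flags of $C$; your separate treatment of the two plane-type conditions is in fact slightly sharper than the paper's unified count (which observes that any offending plane must meet the solid $\langle P_j,E_i\rangle$ in at least a line), and I checked that your stage totals do stay below the stated thresholds. One small imprecision: in part (ii) your extra term is $3(q+1)\gauss{4}{3}=3(q+1)(q^3+q^2+q+1)$, which is larger than the $3q^3(q+1)$ jump in the threshold --- the paper gets exactly $3q^3$ bad solids per $S_i$ by discarding the planes of $S_i$ through $P_1$, whose associated solids are already excluded by $P_1\notin S'$ --- so the jump is not accounted for ``precisely'' by your count, but the slack elsewhere in your estimates absorbs the difference.
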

\begin{proof}
	\begin{enumerate}[label=\roman*)]
	\item
		We frequently make use of the fact that every plane and every solid occurs in at most $q+1$ flags of $C$. We also make use of the following properties:
		\begin{enumerate}[label=$(Q\arabic*)$]
		\item\label{8334953E9F2BDA468490825772537CAB}
			There are $2\cdot s(1,3,6)-s(2,3,6)=2\cdot s(1,4)-s(3)$ solids that contain $P_1$ and a point of $\{P_2,P_3\}$.
		\item\label{DAD2F4EFC5BCD7104DD588605D149796}
			If $E$ is a plane on $P_1$ and $P$ is a point not contained in $E$, then every plane $E'$ on $P_1$ with $E'\cap E\not=P_1$ or $P\in\erz{E,E'}$ meets the solid $\erz{P,E}$ in at least a line and hence there are at most $s(0,1,3)\cdot s(1,2,6)=s(2)\cdot s(4)$ such planes $E'$.
		\item\label{A0B4D1E4427DA63CECB265E615C2CB1B}
			If $E_1$ and $E_2$ are planes with $E_1\cap E_2=P_1$, then there exist less than $s(1,0,2,4)=q^4$ planes in $\erz{E_1,E_2}$ with $E\cap E_1=E\cap E_2=P_1$.
		\end{enumerate}
		
		From \ref{8334953E9F2BDA468490825772537CAB} and the bound in equation (\ref{EQ_9A14742F33B1E7C14895E6636D3AD281}) we see that there exists a flag $(E_1,S_1)$ in $C$ with $P_1\in E_1$ and $P_2,P_3\notin S_1$. According to \ref{8334953E9F2BDA468490825772537CAB} and \ref{DAD2F4EFC5BCD7104DD588605D149796} the number of flags $(E,S)\in\Delta_{P_1}(C)$ for which $E_1\cap E\not=P_1$ or for which $\erz{E_1,E}$ or $S$ contains a point of $\{P_2,P_3\}$ is at most
		\begin{multline*}
			(q+1)(2\cdot s(1,4)-s(3)+2\cdot s(2)\cdot s(4))
				\\=(q+1)(4q^6+6q^5+10q^4+9q^3+9q^2+5q+3)
		\end{multline*}
		which is smaller than the right hand side of equation (\ref{EQ_9A14742F33B1E7C14895E6636D3AD281}). Therefore, we find a flag $(E_2,S_2)\in\Delta_{P_1}(C)$ such that $E_1\cap E_2=P_1$ and neither of the spaces $\erz{E_1,E_2}$ or $S_2$ contains one of the points $P_2$ and $P_3$. Notice that $\dim(\erz{E_1,E_2})=4$, so for the remaining flag $(E_3,S_3)$ we need that $E_3$ is not contained in $\erz{E_1,E_2}$. Using \ref{8334953E9F2BDA468490825772537CAB}, \ref{DAD2F4EFC5BCD7104DD588605D149796} and \ref{A0B4D1E4427DA63CECB265E615C2CB1B} a similar argument shows that at most
		\begin{multline}\label{eqn_klaus}
		(q+1)(2\cdot s(1,4)-s(3)+4\cdot s(2)\cdot s(4)+q^4)
			\\=(q+1)(6q^6+10q^5+17q^4+15q^3+15q^2+9q+5)
		\end{multline}
		flags of $\Delta_{P_1}(C)$ do not satisfy all of the properties we want for the final flag $(E_3,S_3)$. Since this is the right hand side of (\ref{EQ_9A14742F33B1E7C14895E6636D3AD281}) and thus smaller than $|\Delta_{P_1}(C)|$ we find a flag $(E_3,S_3)$ with the desired properties.
	\item
		We can argue analogously to the proof of \ref{BE530922662C195360BFCCCDB690D132}. However, when choosing the flags $(E'_i,S'_i)$ for $i\in\{1,2,3\}$ we additionally have to avoid all flags $(E,S)\in\Delta_{P_2}(C)$ for which $S$ meets one of the solids $S_1$, $S_2$ and $S_3$ in a plane $\pi$ with $P_1\notin\pi$. For $j\in\{1,2,3\}$ each $S_j$ has $q^3$ planes that do not contain $P_1$, so in total there are at most $3q^3$ solids $S$ that must not appear in any of our desired flags $(E'_i,S'_i)$ for $i\in\{1,2,3\}$ and were not considered before. Therefore, it is sufficient to check that the sum of the number in equation (\ref{eqn_klaus}) and the number $3q^3(q+1)$ is the right hand side of equation (\ref{EQ_9A14742F33B1E7C14895E6636D3AD281}) and thus smaller than $|\Delta_{P_2}(C)|$, which is obviously true.\qedhere
	\end{enumerate}
\end{proof}

\begin{Lemma}\label{A0B3BAD29C13B92BD5BADB679BB91E4B}
	Let $P_1$ and $P_2$ be two distinct points of $\PG(6,q)$ and let $E_1$, $E_2$ and $E_3$ be planes such that $E_i\cap E_j=P_1$ and $P_2\notin\langle E_i,E_j\rangle$ for all distinct $i,j\in\{1,2,3\}$. Furthermore, let $\S$ be the set of all solids of $\PG(6,q)$ with $P_2\in S$ and $S\cap E_i\neq\emptyset$ for all $i\in\{1,2,3\}$. Then we have $|\S|\le 3q^6+6q^5+7q^4+4q^3+2q^2+q+1$.
\end{Lemma}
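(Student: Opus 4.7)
The plan is to project from $P_2$ to reduce the statement to a counting problem in $\PG(5,q)$, then handle it by a case split on the image of $P_1$ together with a pair-counting estimate.

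For each $i$ I would set $W_i:=\langle P_2,E_i\rangle$, a solid since $P_2\notin E_i$. Using $P_2\notin\langle E_i,E_j\rangle$ one sees $\dim\langle W_i,W_j\rangle=5$, hence $\dim(W_i\cap W_j)=1$, and combining with $\langle P_1,P_2\rangle\subseteq W_i\cap W_j$ forces $W_i\cap W_j=\langle P_1,P_2\rangle$. Passing to the quotient $\PG(5,q):=\PG(6,q)/P_2$ and writing $\bar W_i:=W_i/P_2$ together with $\bar P_1$ for the image of $\langle P_1,P_2\rangle$, the three planes $\bar W_i$ of $\PG(5,q)$ pairwise meet exactly at $\bar P_1$. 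A solid $S\ni P_2$ corresponds bijectively to the plane $\bar S:=S/P_2$, and $S\cap E_i\ne\emptyset$ iff $\bar S\cap\bar W_i\ne\emptyset$ (every line of $W_i$ through $P_2$ meets $E_i$ in a unique point). So the task reduces to bounding the number of planes $\bar S$ in $\PG(5,q)$ meeting each of $\bar W_1,\bar W_2,\bar W_3$.

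Next, I would split on whether $\bar P_1\in\bar S$. If yes, every incidence is automatic and there are $\gauss{5}{2}=q^6+q^5+2q^4+2q^3+2q^2+q+1$ such planes. Otherwise, each $\bar S\cap\bar W_i$ is a non-empty subspace missing $\bar P_1$, and I would use a pair-counting argument. For each ordered pair $(Q_1,Q_2)\in(\bar W_1\setminus\{\bar P_1\})\times(\bar W_2\setminus\{\bar P_1\})$ (of which there are $(q^2+q)^2$), set $\bar m:=\langle Q_1,Q_2\rangle$; one checks $\bar P_1\notin\bar m$. Every such plane $\bar S$ contains $\bar m$ for at least one pair, so the count of this case is bounded by $\sum_{(Q_1,Q_2)}N(Q_1,Q_2)$, where $N(Q_1,Q_2)$ is the number of planes through $\bar m$ missing $\bar P_1$ and meeting $\bar W_3$. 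Using the quotient $\PG(5,q)/\bar m\cong\PG(3,q)$, one gets $N\le q^2+q$ when $\bar m\cap\bar W_3=\emptyset$ and $N\le q^3+q^2+q$ when $\bar m$ meets $\bar W_3$. Writing $n_1$ for the number of the second-type pairs, this gives $\sum N\le n_1q^3+(q^2+q)^3$.

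The main obstacle will be establishing $n_1\le q^3+q^2$. A special pair yields $Q_3':=\bar m\cap\bar W_3\in\bar W_3\cap\langle\bar W_1,\bar W_2\rangle\setminus\{\bar P_1\}$; conversely, quotienting $\PG(5,q)$ by $Q_3'$ places $\bar W_1,\bar W_2$ into two distinct planes of $\PG(4,q)$ whose intersection is a line (thanks to $Q_3'\in\langle\bar W_1,\bar W_2\rangle$), and removing the point corresponding to $\langle Q_3',\bar P_1\rangle$ leaves at most $q$ lines through $Q_3'$ meeting $\bar W_1$ and $\bar W_2$ at points other than $\bar P_1$. Since $\bar W_3\cap\langle\bar W_1,\bar W_2\rangle$ lies in the plane $\bar W_3$ and so has at most $q^2+q+1$ points, one concludes $n_1\le q(q^2+q)=q^3+q^2$. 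Substituting then gives $|\S|\le(q^6+q^5+2q^4+2q^3+2q^2+q+1)+(2q^6+4q^5+3q^4+q^3)=3q^6+5q^5+5q^4+3q^3+2q^2+q+1$, which is at most the claimed $3q^6+6q^5+7q^4+4q^3+2q^2+q+1$. The delicate bookkeeping is making sure that every plane $\bar S$ in the second case—including those where $\bar S\cap\bar W_i$ is a line for some $i$—is counted at least once by some pair; the observation that at most one such $i$ can occur (two such skew lines would span a solid) keeps this bookkeeping clean.
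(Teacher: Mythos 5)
Your proof is correct and is essentially the paper's argument transported to the quotient space $\PG(6,q)/P_2$: your Case 1 is the paper's count of solids through $\langle P_1,P_2\rangle$, your pairs $(Q_1,Q_2)$ with $\bar m\cap\bar W_3=\emptyset$ give the paper's $(q^2+q)^3$ term for solids spanned by $P_2$ and three intersection points, and your bound $n_1\le q(q^2+q)$ is exactly the paper's bound $|\E|\le q(q^2+q)$ on transversal planes, proved by the same projection argument (at most $q$ transversals through each of the $q^2+q$ points of the third plane). The only difference is bookkeeping — you over-count via pairs where the paper partitions the solids into three classes — which incidentally yields the slightly sharper constant $3q^6+5q^5+5q^4+3q^3+2q^2+q+1$.
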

\begin{proof}
	Let $\E$ be the set of all planes that contain $P_2$ but not $P_1$ and that meet all the planes $E_1$, $E_2$ and $E_3$. There are $s(1,3,6)$ solids on $P_2$ that contain $P_1$. If a solid on $P_2$ does not contain $P_1$ nor any plane of $\E$, then it meets all the planes $E_1$, $E_2$ and $E_3$ in unique points (different from $P_1$) and these three intersection points together with $P_2$ span the solid. Hence, there are at most $(q^2+q)^3$ such solids. Finally, each plane of $\E$ lies in at most $s(0,2,3,6)$ solids that do not contain $P_1$, which shows that the number of solids on $P_2$ that meet $E_1$, $E_2$ and $E_3$ is at most
	\begin{align}
		s(1,3,6)+(q^2+q)^3+|\E|\cdot s(0,2,3,6).\label{klaus_eqn2}
	\end{align}
	It remains to determine an upper bound on $|\E|$. We put $U:=\erz{E_1,E_2}$ with $P_2\notin U$ and if $E\in\E$ we know from $P_2\in E$ and $P_1\notin E$ that $E\cap U$ is a line. We show that every point of $E_3\setminus\{P_1\}$ lies on at most $q$ planes of $\E$. To see this, let $Q$ be a point of $E_3\setminus\{P_1\}$ and suppose that $Q$ lies on at least one plane $E$ of $\E$. Since the lines $\erz{P_2,Q}$ and $E\cap U$ of $E$ are distinct, they meet in a unique point $R$, and $E\cap U$ is a line on $R$. Since $P_2$ is not contained in $\erz{E_1,E_3}$ nor in $\erz{E_2,E_3}$ we have $R\notin E_1$ and $R\notin E_2$. This implies that $R$ lies on exactly $q$ lines of $U$ that meet $E_1$ and $E_2$ but do not contain $P_1$. Since every plane of $\E$ on $Q$ is generated by $P_2$ and such a line, we see that $Q$ lies on at most $q$ planes of $\E$. As there are $q^2+q$ choices for $Q$, we find $|\E|\le (q^2+q)q$. Using this upper bound for $|\E|$, the statement follows from equation (\ref{klaus_eqn2}).
\end{proof}

\begin{Lemma}\label{66147D67EDCF9F5B7E400DEC8ACC8B8B}
	Let $P$ be a point and suppose that there are flags $(E_i,S_i)\in\Delta_{P}(C)$, for $i\in\{1,2,3\}$, such that $E_i\cap E_j=P$ for distinct $i,j\in\{1,2,3\}$. Then every point $Q$ with $Q\notin\langle E_i,E_j\rangle$ and $Q\notin S_i$ for all $i,j\in\{1,2,3\}$ satisfies
	\begin{align*}
		|\Delta_{Q}(C)|\le3q^8+12q^7+21q^6+28q^5+26q^4+18q^3+12q^2+8q+4.
	\end{align*}
\end{Lemma}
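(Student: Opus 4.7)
My plan is to split $\Delta_Q(C)$ into two subsets by exploiting that, since $C$ is independent, for every flag $(E,S)\in\Delta_Q(C)$ and every $i\in\{1,2,3\}$ at least one of the intersections $S\cap E_i$ or $E\cap S_i$ is non-empty.

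The first subset consists of flags for which $S\cap E_i\neq\emptyset$ for all $i$. Since $Q\in E\subseteq S$ we have $Q\in S$, and since $Q\notin\langle E_i,E_j\rangle$ for all $i,j$ (taking $j=i$ this in particular gives $Q\notin E_i$, so $Q\neq P$), the hypotheses of Lemma \ref{A0B3BAD29C13B92BD5BADB679BB91E4B} are satisfied with $P_1=P$ and $P_2=Q$. That lemma bounds the number of possible solids $S$ by $3q^6+6q^5+7q^4+4q^3+2q^2+q+1$, and the standing assumption of this section that every solid lies in at most $q+1$ flags of $C$ gives a first contribution of $(q+1)(3q^6+6q^5+7q^4+4q^3+2q^2+q+1)$.

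The second subset consists of flags for which some intersection $E\cap S_i$ is non-empty. Because $Q\notin S_i$ by hypothesis, such a plane $E$ passes through $Q$ and meets $S_i$ in a point or line disjoint from $Q$. I count such planes by complementation inside the pencil of planes through $Q$: the total number of planes through $Q$ in $\PG(6,q)$ is $\gauss{6}{2}$, and by the lemma of Section 2 the number of these that are skew to $S_i$ equals $s(3,0,2,6)=q^8$, yielding $q^7+2q^6+2q^5+3q^4+2q^3+2q^2+q+1$ planes through $Q$ that meet $S_i$. Taking a union bound over $i\in\{1,2,3\}$ and then multiplying by $q+1$ (the maximum multiplicity of a plane in $C$) gives a second contribution of $(q+1)(3q^7+6q^6+6q^5+9q^4+6q^3+6q^2+3q+3)$.

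Adding the two contributions yields exactly $3q^8+12q^7+21q^6+28q^5+26q^4+18q^3+12q^2+8q+4$, as claimed. The only non-routine step is verifying that the hypotheses of Lemma \ref{A0B3BAD29C13B92BD5BADB679BB91E4B} really hold in the first case ($Q\in S$, $Q\neq P$, and $Q\notin\langle E_i,E_j\rangle$ for distinct $i,j$), all of which are immediate from the data; everything else is direct counting plus polynomial arithmetic, so I do not anticipate a genuine obstacle.
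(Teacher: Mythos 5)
Your proof is correct and follows essentially the same route as the paper: both split $\Delta_Q(C)$ according to whether the plane meets some $S_i$ (bounded via $3(\gauss{6}{2}-q^8)$ planes on $Q$) or the solid meets all three $E_i$ (bounded via Lemma \ref{A0B3BAD29C13B92BD5BADB679BB91E4B} applied with $P_1=P$, $P_2=Q$), and then multiply by the multiplicity bound $q+1$. The arithmetic checks out and yields exactly the stated polynomial.
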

\begin{proof}
	For $i\in\{1,2,3\}$ exactly $n:=s(0,2,6)-s(3,0,2,6)$ planes on $Q$ meet $S_i$. Since every plane lies in at most $q+1$ flags of $C$, it follows that there exists at most $3n(q+1)$ flags $(E,S)\in \Delta_Q(C)$ such that $E$ has non-empty intersection with at least one of the solids $S_1$, $S_2$ or $S_3$.

	Every other flag $f=(E,S)\in\Delta_Q(C)$ has the property that its solid $S$ meets $E_1$, $E_2$ and $E_3$. Lemma \ref{A0B3BAD29C13B92BD5BADB679BB91E4B} shows that there at most $n':=3q^6+6q^5+7q^4+4q^3+2q^2+q+1$ such solids. Since each solid lies in at most $q+1$ flags of $C$, there are at most $n'(q+1)$ such flags. Therefore $|\Delta_{Q}(C)|\le (3n+n')(q+1)$ proving the desired bound.
\end{proof}

\begin{Lemma}\label{HilfslemmaKlaus}
	Let $S_1$ and $S_2$ be solids of $\PG(6,q)$ with $\dim(S_1\cap S_2)\le 1$ and let $P$ be a point that is not contained in $S_1\cup S_2$. Then the number of planes that contain $P$ and meet $S_1$ and $S_2$ non-trivially is at most $2q^6+2q^5+3q^4+2q^3+2q^2+q+1$.
\end{Lemma}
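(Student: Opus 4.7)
My plan is to reduce the problem to counting lines meeting two solids in $\PG(5,q)$ by projecting from $P$. Fix a hyperplane $H'$ of $\PG(6,q)$ with $P \notin H'$ and let $\pi$ denote projection from $P$ onto $H'$. Planes through $P$ correspond bijectively to lines of $H' \cong \PG(5,q)$ via $E \mapsto E \cap H'$, and since $P \notin S_i$, each $S_i$ is sent to a solid $T_i = \langle P, S_i\rangle \cap H'$ of $H'$. A plane $E$ through $P$ meets $S_i$ nontrivially iff the corresponding line $E \cap H'$ meets $T_i$, so it suffices to bound the number of lines of $\PG(5,q)$ meeting both $T_1$ and $T_2$.

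I would first determine $\tau := T_1 \cap T_2$. A short dimension computation using $\dim\langle S_1, S_2\rangle \ge 5$ (which follows projectively from $\dim(S_1 \cap S_2) \le 1$) shows that $\tau$ has projective dimension either $1$ or $2$; the planar case occurs exactly when $\dim(S_1 \cap S_2) = 1$ and $P \in \langle S_1, S_2\rangle$.

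To count lines $m$ of $\PG(5,q)$ meeting both $T_1$ and $T_2$, I split according to whether $m \subseteq T_1$, $m \subseteq T_2$, or $m$ is contained in neither. Lines of $T_i$ meet the other solid iff they meet $\tau$, giving standard Gaussian-coefficient counts inside a $\PG(3,q)$. Lines contained in neither $T_1$ nor $T_2$ meet each solid in a unique point and so are parametrised by ordered pairs $(Q_1, Q_2) \in T_1 \times T_2$; the constraint $m \not\subseteq T_1 \cup T_2$ splits this further into the subcase $Q_1 = Q_2 \in \tau$ (count lines through this point avoiding $T_1 \cup T_2$) and the subcase $Q_1 \in T_1 \setminus \tau$, $Q_2 \in T_2 \setminus \tau$. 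Summing the contributions yields exactly $2q^6 + 2q^5 + 3q^4 + 2q^3 + 2q^2 + q + 1$ in the case $\dim \tau = 2$ and a strictly smaller number (by $q^5(q-1)$) in the case $\dim \tau = 1$, which gives the claimed bound. The main obstacle is arithmetic bookkeeping: avoiding double counting of lines through points of $\tau$ is what makes the tight bound in the planar case come out exactly.
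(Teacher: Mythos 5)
Your proposal is correct, and the numbers check out: the projection from $P$ onto $H'$ is a bijection from planes on $P$ to lines of $H'\cong\PG(5,q)$ carrying the condition ``$E$ meets $S_i$'' to ``$E\cap H'$ meets $T_i$''; your dimension analysis of $\tau=T_1\cap T_2$ is right ($\tau$ is a line except when $\dim(S_1\cap S_2)=1$ and $P\in\langle S_1,S_2\rangle$, in which case it is a plane); and your three contributions in the planar case sum to $(2q^4+2q^3+3q^2+q+1)+(q^6+2q^5+q^4-q^2)+q^6=2q^6+2q^5+3q^4+2q^3+2q^2+q+1$, with the line case coming out $q^5(q-1)$ smaller. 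The paper reaches the same count without projecting: it observes that the lines through $P$ meeting both solids are exactly the lines through $P$ inside $V:=\langle P,\,\langle P,S_2\rangle\cap S_1\rangle$ (whose projection is precisely your $\tau$), and then partitions the planes on $P$ by $\dim(E\cap V)\in\{0,1,2\}$ rather than by containment of the projected line in $T_1$, $T_2$, or neither. That partition is arithmetically a bit cleaner -- it avoids the inclusion--exclusion for lines inside $T_1\cup T_2$ and the split of the transversal lines into the subcases $Q_1=Q_2$ and $Q_1\neq Q_2$ -- but the two computations correspond term-by-term under your projection, and both identify the planar (worst) case correctly. When writing yours up, just make explicit the two small facts you are implicitly using: a line of $T_i$ meets the other solid exactly when it meets $\tau$, and in your last subcase each line arises from exactly one ordered pair $(Q_1,Q_2)$ because it meets each $T_i$ in a single point.
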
	
\begin{proof}
	We have $d:=\dim(S_{1}\cap S_{2})\in\{0,1\}$. A line through $P$ meets $S_1$ and $S_2$ if and only if it meets one and hence both of the subspaces $U_1:=\langle P,S_2\rangle\cap S_1$ and $U_2:=\langle P,S_1\rangle\cap S_2$, that is, if the line is contained in the subspace $V:=\erz{U_1,P}$. The subspaces $U_1$ and $U_2$ have the same dimension $u$ where $u=1$ if $d=0$ and $u\in\{1,2\}$ when $d=1$. We have $\dim(V)=u+1$.

	A plane on $P$ that meets $V$ only in $P$ is spanned by $P$, a point of $S_1\setminus U_1$ and a point of $S_2\setminus U_2$, so there are $(s(3)-s(u))^2$ such planes. The number of planes on $P$ that meet $V$ in a line is equal to the number $s(0,1,u+1)$ of lines of $V$ on $P$ multiplied with the number $s(1,2,6)-s(1,2,u+1)$ of planes that meet $V$ in a given line.
	Finally there are $s(0,2,u+1)$ planes on $P$ that are contained in $V$. Hence, the total number of planes on $P$ that meet $S_1$ and $S_2$ non-trivially is equal to
	\begin{align*}
	(s(3)-s(u))^2+s(0,1,u+1)(s(1,2,6)-s(1,2,u+1))+s(0,2,u+1)
	\end{align*}
	The larger value occurs for $u=2$ and gives the bound in the lemma.
\end{proof}

\begin{Lemma}\label{AB78485B265AA118CEC0F6A0C158817C}
	Let $P_1$, $P_2$ and $P_3$ be non-collinear points of $\PG(6,q)$. Suppose that for $i\in\{1,2\}$ and $r\in\{1,2,3\}$ there exist flags $f_{i,r}=(E_{i,r},S_{i,r})\in\Delta_{P_i}(C)$ such that
	\begin{enumerate}[label=$\bullet$]
	\item
		$\forall r,s\in\{1,2,3\}:\dim(S_{1,r}\cap S_{2,s})\le1$ and
	\item
		$\forall i\in\{1,2\},\forall\{r,s,t\}=\{1,2,3\}:P_{3-i},P_3\notin\langle E_{i,r},E_{i,s}\rangle\cup S_{i,r}\text{ and }E_{i,r}\cap E_{i,s}=P_i$.
	\end{enumerate}
	Then $|\Delta_{P_3}(C)|\le 24q^7+54q^6+71q^5+67q^4+48q^3+33q^2+22q+11$.
\end{Lemma}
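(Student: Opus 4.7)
The plan is to fix an arbitrary flag $(E,S)\in\Delta_{P_3}(C)$ and exploit independence of $C$ against the six given flags: for every pair $(i,r)$ with $i\in\{1,2\}$ and $r\in\{1,2,3\}$, either $E\cap S_{i,r}\neq\emptyset$ or $S\cap E_{i,r}\neq\emptyset$. The flags in $\Delta_{P_3}(C)$ then split into three mutually exclusive cases depending on whether $E$ meets some solid in the family $\{S_{1,1},S_{1,2},S_{1,3}\}$ and whether $E$ meets some solid in the family $\{S_{2,1},S_{2,2},S_{2,3}\}$.

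In the first case $E$ is skew to all three $S_{1,r}$, which forces $S$ to meet each of $E_{1,1},E_{1,2},E_{1,3}$ non-trivially. The hypotheses provide precisely the input needed by Lemma \ref{A0B3BAD29C13B92BD5BADB679BB91E4B}: the planes $E_{1,r}$ meet pairwise in $P_1$, one has $P_3\notin\langle E_{1,r},E_{1,s}\rangle$, and $P_3\neq P_1$ since the three points $P_1,P_2,P_3$ are non-collinear. Hence the number of admissible solids $S$ is at most $B:=3q^6+6q^5+7q^4+4q^3+2q^2+q+1$, and each occurs in at most $q+1$ flags of $C$. The second case, in which $E$ meets some $S_{1,r_1}$ but is skew to all three $S_{2,r}$, is handled symmetrically using the flags through $P_2$: now $S$ must meet each $E_{2,r}$, and Lemma \ref{A0B3BAD29C13B92BD5BADB679BB91E4B} yields the same bound $B(q+1)$.

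The remaining case is that $E$ meets some $S_{1,r_1}$ and some $S_{2,r_2}$ non-trivially. Here the hypotheses $\dim(S_{1,r_1}\cap S_{2,r_2})\le 1$ and $P_3\notin S_{1,r_1}\cup S_{2,r_2}$ allow Lemma \ref{HilfslemmaKlaus} to be applied for each of the nine pairs $(r_1,r_2)$, bounding the number of planes $E$ on $P_3$ by $9C$ with $C:=2q^6+2q^5+3q^4+2q^3+2q^2+q+1$. Each such plane lies in at most $q+1$ flags of $C$, contributing at most $9C(q+1)$. Summing the three contributions gives $(2B+9C)(q+1)$, whose expansion is exactly the asserted bound $24q^7+54q^6+71q^5+67q^4+48q^3+33q^2+22q+11$. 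The only non-trivial step is the final arithmetic expansion; otherwise each case reduces cleanly to a previously established lemma, and the main care required is simply checking that the geometric hypotheses of Lemmas \ref{A0B3BAD29C13B92BD5BADB679BB91E4B} and \ref{HilfslemmaKlaus} hold in each situation.
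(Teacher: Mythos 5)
Your proof is correct and follows essentially the same route as the paper: split the flags of $\Delta_{P_3}(C)$ according to whether the plane misses all of $S_{1,*}$ or all of $S_{2,*}$ (forcing the solid to meet the corresponding triple of planes, handled by Lemma \ref{A0B3BAD29C13B92BD5BADB679BB91E4B}) or meets one solid from each family (handled by Lemma \ref{HilfslemmaKlaus}), then multiply by the $q+1$ flags per plane/solid to get $(2m+9n)(q+1)$. The hypotheses are checked correctly in each application, so nothing further is needed.
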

\begin{proof}
	Because $C$ is independent we know that for every $(E,S)\in C$ and every $i\in\{1,2\}$ we have $S\cap E_{i,r}\neq\emptyset$ for all $r\in\{1,2,3\}$ or $E\cap S_{i,r}\neq\emptyset$ for at least one $r\in\{1,2,3\}$. For $i\in\{1,2\}$ Lemma \ref{A0B3BAD29C13B92BD5BADB679BB91E4B} shows that the number of solids of $\PG(6,q)$ that contain $P_3$ and meet $E_{i,1}$, $E_{i,2}$ and $E_{i,3}$ is at most
	\begin{align}
		m:=3q^6+6q^5+7q^4+4q^3+2q^2+q+1.
	\end{align}
	For every flag $(E,S)\in\Delta_{P_3}(C)$ for which $S$ is not such a solid we know that $E$ is a plane that meets $S_{1,r}$ and $S_{2,s}$ for some $r,s\in\{1,2,3\}$. For any choice of $r,s\in\{1,2,3\}$, Lemma \ref{HilfslemmaKlaus} shows that there exist at most
	\begin{align*}
		n:=2q^6+2q^5+3q^4+2q^3+2q^2+q+1
	\end{align*}
	planes on $P_3$ that meet $S_{1,r}$ and $S_{2,s}$. Since every plane and every solid occurs in at most $q+1$ flags of $C$, it follows that $|\Delta_{P_3}(C)|\le (2m+9n)(q+1)$, as claimed.
\end{proof}

\begin{Proposition}\label{FA8D87A7DFE81F19C5AA9B3D4473192B}
	Let $C$ be an independent set of the Kneser graph of type $\{2,3\}$ in $\PG(6,q)$ with $q\ge 7$ that has the property that every plane and every solid of $\PG(6,q)$ is contained in at most $s(1)$ flags of $C$. Then
	\begin{align*}
	|C|\le 24q^{10}+79q^9+155q^8+210q^7+216q^6+187q^5+140q^4+93q^3+51q^2+22q+5.
	\end{align*}
\end{Proposition}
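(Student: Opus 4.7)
My plan is to carry out a single point--solid double count. For a point $P$, let
\begin{align*}
\Delta_P^{**}(C):=\{(E,S)\in C:P\in S\setminus E\}.
\end{align*}
Since every flag $(E,S)\in C$ satisfies $|S\setminus E|=s(3)-s(2)=q^3$, Fubini gives the exact identity
\begin{align*}
\sum_{P\in\PG(6,q)}|\Delta_P^{**}(C)|\;=\;\sum_{(E,S)\in C}|S\setminus E|\;=\;|C|\cdot q^3.
\end{align*}

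\medskip

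The second ingredient will be a uniform pointwise bound on $|\Delta_P^{**}(C)|$. By Lemma \ref{EisHcapS} \ref{EisHcapSii} the distinct solids appearing in $\Delta_P^{**}(C)$ form a family of cardinality at most $s(1,4)$; the standing hypothesis forces each such solid to lie in at most $s(1)=q+1$ flags of $C$, so
\begin{align*}
|\Delta_P^{**}(C)|\;\le\;(q+1)\,s(1,4)\qquad\text{for every point }P.
\end{align*}
Substituting this bound into the identity above and dividing by $q^3$ will give the clean inequality
\begin{align*}
|C|\;\le\;\frac{s(6)\,(q+1)\,s(1,4)}{q^3}.
\end{align*}

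\medskip

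The remaining, and purely arithmetic, obstacle is to expand the right-hand side using $s(1,4)=\gauss{5}{2}=q^6+q^5+2q^4+2q^3+2q^2+q+1$ and $s(6)=\sum_{i=0}^{6}q^{i}$, to divide by $q^3$, and to verify that the result is dominated term by term by the explicit polynomial claimed in the proposition. This check is easy, since the computation above produces a bound whose leading coefficient is only $1$ whereas the proposition's polynomial leads with $24q^{10}$, so the slack is ample to absorb every lower-order correction and a direct coefficient comparison finishes the proof. In particular the refined configuration lemmas \ref{D0DCC8ED0186F40E225BD9805289E7E5}, \ref{66147D67EDCF9F5B7E400DEC8ACC8B8B} and \ref{AB78485B265AA118CEC0F6A0C158817C} are not needed for the bound itself; I suspect their role is to prepare the stability and characterisation arguments that will be carried out in the second and third steps of the paper, rather than to drive the crude numerical estimate obtained here.
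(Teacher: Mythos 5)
Your argument is correct, and it takes a genuinely different --- and in fact sharper --- route than the paper. Both ingredients check out: the identity $\sum_P|\Delta_P^{**}(C)|=|C|\cdot q^3$ is an exact incidence count (every flag $(E,S)$ has exactly $|S\setminus E|=q^3$ such points $P$), and the pointwise bound $|\Delta_P^{**}(C)|\le s(1,4)\cdot s(1)$ is exactly the combination of Lemma \ref{EisHcapS}~\ref{EisHcapSii} with the degree hypothesis that the paper itself already uses, locally for the $s(2)$ points of a single plane, inside Lemma \ref{F3289245C0ECD376C35648C3B4FAB775}; your only new step is to run that count globally over all $s(6)$ points of $\PG(6,q)$. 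Carrying out the arithmetic, $s(6)\cdot s(1,4)\cdot(q+1)=q^{13}+3q^{12}+6q^{11}+10q^{10}+14q^{9}+17q^{8}+19q^{7}+19q^{6}+17q^{5}+14q^{4}+10q^{3}+6q^{2}+3q+1$, so your bound reads $|C|\le q^{10}+3q^{9}+6q^{8}+10q^{7}+\dots$, about $24$ times smaller in the leading term than what the authors obtain through the configuration machinery of Lemmas \ref{D0DCC8ED0186F40E225BD9805289E7E5}--\ref{AB78485B265AA118CEC0F6A0C158817C} (they fix a flag $(E,S)$, bound each degree $|\Delta_P(C)|$ for $P\in S$ by roughly $24q^{7}$, and sum over the $\approx q^{3}$ points of $S$). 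Since Proposition \ref{FA8D87A7DFE81F19C5AA9B3D4473192B} is only ever invoked for its numerical value (in Case 1 of Proposition \ref{580D3E3B704A762B79A76A6CCC614713}), your stronger bound slots into the rest of the paper unchanged and would even improve the constants in Theorem \ref{main2}, rendering Lemmas \ref{D0DCC8ED0186F40E225BD9805289E7E5}--\ref{AB78485B265AA118CEC0F6A0C158817C} unnecessary for this step. One pedantic remark: after dividing by $q^3$ the comparison with the stated polynomial is not literally term by term --- the constant contribution of your bound, $10+6/q+3/q^{2}+1/q^{3}$, exceeds $5$ --- but the inequality as a whole holds with enormous slack for all $q\ge 2$, so this is harmless; just phrase the final comparison as a single polynomial inequality rather than a coefficientwise one.
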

\begin{proof}
	Let $P_1$ and $P_2$ be distinct points of $\PG(6,q)$ such that $|\Delta_{P_1}(C)|,|\Delta_{P_2}(C)|\ge|\Delta_P(C)|$ for all points $P\neq P_1$. If every flag $(E,S)\in C$ satisfies $\langle P_1,P_2\rangle\cap E\neq\emptyset$, then
	\begin{align*}
		|C|
			&\le(s(2,6)-s(1,-1,2,6))\cdot s(1)\\
			&=q^{10}+3q^9+5q^8+7q^7+8q^6+8q^5+7q^4+5q^3+3q^2+2q+1
	\end{align*}
	since there are $s(2,6)-s(1,-1,2,6)$ planes that meet the line $\erz{P_1,P_2}$ and since every plane lies in at most $q+1$ flags of $C$. Therefore, we may assume that $C$ contains a flag $f=(E,S)$ with $\langle P_1,P_2\rangle\cap E=\emptyset$ and thus $\dim(S\cap\langle P_1,P_2\rangle)\le 0$. Every flag $(E',S')\in C$ either satisfies $E'\cap S\neq\emptyset$ or $E'\cap S=\emptyset\neq S'\cap E$. Lemma \ref{F3289245C0ECD376C35648C3B4FAB775} shows that at most
	\begin{align}
		(q^8+2q^7+4q^6+5q^5+6q^4+5q^3+4q^2+2q+1)\cdot s(1)\label{EQ_2B3A5C22BE91C634F52D670F66AF1351}
	\end{align}
	flags $(E',S')$ of $C$ satisfy $E'\cap S=\emptyset\neq S'\cap E$. Before we count all flags $f'=(E',S')$ with $E'\cap S\neq\emptyset$ we note that we either have
	\begin{align}
		|\Delta_P(C)|\le|\Delta_{P_2}(C)|\le6q^7+16q^6+27q^5+35q^4+33q^3+24q^2+14q+5\label{EQ_7637AB5B76D583899A2DAB19EB3F68E6}
	\end{align}
	for all $P\in\PG(6,q)\setminus\langle P_1,P_2\rangle$ or
	\begin{align*}
		|\Delta_{P_1}(C)|\ge|\Delta_{P_2}(C)|>6q^7+16q^6+27q^5+35q^4+33q^3+24q^2+14q+5.
	\end{align*}
	If the second situation occurs, then Lemma \ref{D0DCC8ED0186F40E225BD9805289E7E5} provides flags $f_{i,j}\in C$ for $i\in\{1,2\}$ and $j\in\{1,2,3\}$ required to apply Lemma \ref{AB78485B265AA118CEC0F6A0C158817C} proving
	\begin{align}
		|\Delta_P(C)|\le 24q^7+54q^6+71q^5+67q^4+48q^3+33q^2+22q+11\label{EQ_DCA300F00031A1E0648D6A5DA657C97D}
	\end{align}
	for all $P\in\PG(6,q)\setminus\langle P_1,P_2\rangle$. Since the bound in equation (\ref{EQ_DCA300F00031A1E0648D6A5DA657C97D}) is weaker than the bound given in equation (\ref{EQ_7637AB5B76D583899A2DAB19EB3F68E6}) we know that it also holds in the first case. In particular, equation (\ref{EQ_DCA300F00031A1E0648D6A5DA657C97D}) holds for all $P\in S\setminus(S\cap\langle P_1,P_2\rangle)$. Note that we chose $f$ such that $S\cap\langle P_1,P_2\rangle$ is at most a point. Now, if $\widehat{P}:=S\cap\langle P_1,P_2\rangle\neq\emptyset$, then, since $P_1$ and $P_2$ are distinct, there is an index $i\in\{1,2\}$ such that $\widehat{P}\neq P_i$ and, using the flags $f_{i,1}$, $f_{i,2}$ and $f_{i,3}$, we may apply Lemma \ref{66147D67EDCF9F5B7E400DEC8ACC8B8B} to see that
	\begin{align*}
		|\Delta_{\widehat{P}}(C)|\le 3q^8+12q^7+21q^6+28q^5+26q^4+18q^3+12q^2+8q+4,
	\end{align*}
	which is weaker than the bound in equation (\ref{EQ_DCA300F00031A1E0648D6A5DA657C97D}) for $q\ge7$. Therefore, the number of all flags $(E',S')$ of $C$ with $E'\cap S\neq\emptyset$ is at most
	\begin{multline*}
		(s(3)-1)\cdot(24q^7+54q^6+71q^5+67q^4+48q^3+33q^2+22q+11)\\
			+3q^8+12q^7+21q^6+28q^5+26q^4+18q^3+12q^2+8q+4\\
				=24q^{10}+78q^9+152q^8+204q^7+207q^6+176q^5+129q^4+84q^3+45q^2+19q+4.
	\end{multline*}
	Together with the upper bound in equation (\ref{EQ_2B3A5C22BE91C634F52D670F66AF1351}) for the remaining flags of $C$, this provides the claimed upper bound on the cardinality of $C$.
\end{proof}
\end{section}

\begin{section}{The second special case}

In this section we study independent sets of the Kneser graph of type $\{2,3\}$ in $\PG(6,q)$ with the property that every plane and every solid of $\PG(6,q)$ occurs in at most $q^2+q+1$ flags of $C$.

\begin{Lemma}\label{klaus1}
	Let $E$ be a plane and suppose that the solids $S$ with $(E,S)\in C$ span a subspace $H$ of dimension at least $5$. Suppose also that every plane of $\PG(6,q)$ occurs in at most $s(2)=q^2+q+1$ flags of $C$. Then the number of flags $(E',S')\in C$ with $E'\cap E=\emptyset$ is at most $s(1,4)\cdot s(2)\cdot (s(2)+1)$.
\end{Lemma}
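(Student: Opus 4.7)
The plan is to partition the flags $(E',S')\in C$ with $E'\cap E=\emptyset$ into the set $B$ of those with $S'\cap E\neq\emptyset$ and the set $A$ of those with $S'\cap E=\emptyset$. For $B$, the spanning hypothesis yields at least one flag $(E,S_0)\in C$, and Lemma~\ref{F3289245C0ECD376C35648C3B4FAB775} applied to this flag with $\xi=s(2)$ immediately gives
\begin{align*}
|B|\le s(2)\cdot s(1,4)\cdot s(2)=s(1,4)\cdot s(2)^2.
\end{align*}
It therefore suffices to show $|A|\le s(1,4)\cdot s(2)$.

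Fix $(E',S')\in A$ and put $W:=\langle E,E'\rangle$, which has dimension $5$ since $E\cap E'=\emptyset$. Because $C$ is independent and $S'\cap E=\emptyset$, every flag $(E,S)\in C$ must satisfy $E'\cap S\neq\emptyset$. For such a solid $S\supseteq E$ one has $\langle E',S\rangle\supseteq W$, so either $S\subseteq W$ (in which case $\langle E',S\rangle=W$ has dimension $5$ and $E'\cap S$ is a point), or $S\not\subseteq W$ (in which case $\langle E',S\rangle=\PG(6,q)$ and $E'\cap S=\emptyset$). Consequently every solid $S$ with $(E,S)\in C$ is contained in $W$, and taking spans yields $H\subseteq W$. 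Since $\dim H\ge 5=\dim W$, we conclude $H=W$; in particular $\dim H=5$ and $E'\subseteq H$.

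Next I would argue that $S'\cap H=E'$. The solid $S'$ cannot be contained in $H$, for otherwise $E$ and $S'$ would be subspaces of dimensions $2$ and $3$ of the $5$-dimensional space $H$, and the dimension formula would force $E\cap S'\neq\emptyset$, contradicting $(E',S')\in A$. Hence $S'\cap H$ is a plane containing $E'$ and therefore equals $E'$. This places $E'$ in the set $\E$ of Lemma~\ref{EisHcapS}~i) applied to the hyperplane $H$, so the number of distinct planes occurring as $E'$ in a flag of $A$ is at most $|\E|\le s(1,4)$. Since every plane lies in at most $s(2)$ flags of $C$, this gives $|A|\le s(1,4)\cdot s(2)$, and adding the bound on $|B|$ yields
\begin{align*}
|A|+|B|\le s(1,4)\cdot s(2)+s(1,4)\cdot s(2)^2=s(1,4)\cdot s(2)\cdot(s(2)+1).
\end{align*}

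The main obstacle is really the single geometric observation $S'\cap H=E'$, which is exactly what brings $E'$ into the mutually intersecting family controlled by Lemma~\ref{EisHcapS}; once this is seen, the remainder is routine dimension counting and direct use of the previously established lemmas.
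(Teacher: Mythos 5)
Your proof is correct and follows essentially the same route as the paper: the same split into flags with $S'\cap E=\emptyset$ and $S'\cap E\neq\emptyset$, Lemma~\ref{F3289245C0ECD376C35648C3B4FAB775} with $\xi=s(2)$ for the latter, and the observation that $H=\langle E,E'\rangle$ together with Lemma~\ref{EisHcapS} for the former. Your explicit verification that $S'\cap H=E'$ is a step the paper leaves implicit, but it is the same argument.
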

\begin{proof}
	Let $M$ be the set consisting of all flags $(E',S')$ of $C$ such that $S'\cap E=\emptyset$ and let $N$ be the set consisting of all flags $(E',S')$ of $C$ such that $S'\cap E\not=\emptyset$ and $E'\cap E=\emptyset$. Every flag $(E',S')\in C$ with $E'\cap E=\emptyset$ lies in $M\cup N$.
	Lemma \ref{F3289245C0ECD376C35648C3B4FAB775} applied with $\xi=s(2)$ shows that $|N|\le s(1,4)\cdot s(2)^2$. For an upper bound on the number of flags in $M$, we let $\E$ denote the set of all planes that occur in a flag of $M$. The hypothesis of this lemma shows that $|M|\le |\E|\cdot s(2)$. In order to prove the statement, it remains to show that $|\E|\le s(1,4)$.

	Consider $E'\in \E$. Let $S'$ be a solid with $(E',S')\in M$. Then $S'\cap E=E'\cap E=\emptyset$. Since $C$ is independent and since $S'\cap E=\emptyset$, then $E'$ meets every solid $S$ for which $(E,S)\in C$. Then every such solid $S$ is spanned by $E$ and a point of $E'$, so $H\subseteq \erz{E,E'}$. Since $H$ has dimension at least $5$, it follows that $H$ has dimension 5 and that $H=\erz{E,E'}$ for all $E'\in\E$. Lemma \ref{EisHcapS} shows that $|\E|\le s(1,4)$.
\end{proof}

\begin{Proposition}\label{580D3E3B704A762B79A76A6CCC614713}
	Let $C$ be an independent set of the Kneser graph of type $\{2,3\}$ in $\PG(6,q)$ with $q\ge 8$ that has the property that every plane and every solid of $\PG(6,q)$ occurs in at most $s(2)$ flags of $C$. Then
	\begin{align*}
		|C|\le 24q^{10}+79q^9+155q^8+210q^7+218q^6+189q^5+142q^4+95q^3+53q^2+22q+5.	
	\end{align*}
\end{Proposition}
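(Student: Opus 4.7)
My plan is to follow the template of Proposition 4.6, but to invoke Lemma 5.1 at the key step: since the weaker appearance bound $s(2)$ in place of $s(1)$ would otherwise inflate every intermediate estimate by one power of $q$, one must change the decomposition in a way that Lemma 5.1 enables. The crucial preliminary observation is that if some plane $E$ of $\PG(6,q)$ lies in at least $q+2$ flags of $C$, then the solids appearing in those flags cannot all be confined to a common $4$-subspace (since such a $4$-subspace contains only $q+1$ solids through $E$), so they span a subspace of dimension at least $5$ and Lemma 5.1 applies to $E$. The dual statement gives the analogous fact for exceptional solids.

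Accordingly, I split into two cases. If every plane and every solid of $\PG(6,q)$ is contained in at most $s(1)$ flags of $C$, then Proposition 4.6 applies directly and produces a bound that does not exceed the one claimed here (the difference is the non-negative polynomial $2q^6+2q^5+2q^4+2q^3+2q^2$), so we are done. Otherwise, using the self-duality of the flag type $\{2,3\}$ in $\PG(6,q)$, we may assume there exists a plane $E_*$ lying in at least $q+2$ flags of $C$. By Lemma 5.1 the number of flags $(E',S') \in C$ with $E' \cap E_* = \emptyset$ is at most $s(1,4) \cdot s(2) \cdot (s(2)+1)$, which is of order $q^{10}$. The remaining flags satisfy $E' \cap E_* \neq \emptyset$ and are therefore counted by $\sum_{P \in E_*} |\Delta_P(C)|$.

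To bound $|\Delta_P(C)|$ for the $s(2) \approx q^2$ points $P$ of $E_*$, I would adapt Lemmas 4.1, 4.3 and 4.5 to the current hypothesis by replacing the factor $(q+1)$ with $s(2)=q^2+q+1$ in the thresholds and in the resulting estimates. Choosing auxiliary points $P_1, P_2$ of largest $|\Delta_{P_i}(C)|$ and applying the adapted Lemma 4.1 to produce the required flag triples, the adapted Lemma 4.5 then gives $|\Delta_P(C)|$ of order $q^8$ for $P$ non-collinear with $P_1, P_2$; points of $E_*$ lying on the line $\langle P_1, P_2 \rangle$ are handled separately via the adapted Lemma 4.3. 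Summing over the $s(2)$ points of $E_*$ contributes the $24q^{10}$ leading term, and careful bookkeeping yields the lower-order coefficients in the stated bound. The main obstacle is precisely this constant-tracking: the naive strategy of summing $|\Delta_P(C)|$ over the $s(3) \approx q^3$ points of a solid $S$, as in Proposition 4.6, would produce a bound of order $q^{11}$, so Lemma 5.1, which replaces this solid-summation by a plane-summation over only $s(2) \approx q^2$ points of $E_*$, is exactly what closes the argument.
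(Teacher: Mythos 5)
Your opening observation is exactly the one the paper uses to make Lemma \ref{klaus1} applicable: a plane lying in at least $q+2$ flags of $C$ has its solids spanning a subspace of dimension at least $5$, because a $4$-space contains only $q+1$ solids through a fixed plane. From there, however, your decomposition diverges from the paper's and runs into a fatal arithmetic obstruction. In your second case you bound $|C|$ by the sum of two disjoint contributions: the flags $(E',S')$ with $E'\cap E_*=\emptyset$, at most $s(1,4)\cdot s(2)\cdot(s(2)+1)=q^{10}+3q^9+\cdots$ by Lemma \ref{klaus1} (you correctly note this is of order $q^{10}$, and its leading coefficient is $1$), and the flags with $E'\cap E_*\neq\emptyset$, bounded by $\sum_{P\in E_*}|\Delta_P(C)|$. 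Even granting that the adapted Lemma \ref{AB78485B265AA118CEC0F6A0C158817C} gives $|\Delta_P(C)|\le(2m+9n)\cdot s(2)=24q^8+54q^7+95q^6+\cdots$ for every relevant point $P$, summing over the $s(2)=q^2+q+1$ points of $E_*$ already yields $24q^{10}+78q^9+173q^8+\cdots$. Adding the two disjoint contributions gives at least $25q^{10}+\cdots$, which exceeds the claimed $24q^{10}+79q^9+\cdots$: the two terms your decomposition produces cannot both fit under the stated bound. A second, independent problem is that if $\langle P_1,P_2\rangle\subseteq E_*$, then $q+1$ points of $E_*$ must be handled by the adapted Lemma \ref{66147D67EDCF9F5B7E400DEC8ACC8B8B}, whose bound is of order $3q^9$ per point, contributing roughly a further $3q^{10}$.

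The paper avoids all of this by never re-running Section~4 under the weaker multiplicity hypothesis. It lets $\E$ (resp.\ $\S$) be the set of planes (resp.\ solids) lying in at least $q+2$ flags of $C$ and distinguishes three cases. If $|\E|,|\S|\le s(4)$, it prunes $C$ to a subset $C'$ in which every plane and solid lies in at most $q+1$ flags, losing at most $2s(4)q^2$ flags, and applies Proposition \ref{FA8D87A7DFE81F19C5AA9B3D4473192B} to $C'$; this is precisely where the difference polynomial $2q^6+2q^5+2q^4+2q^3+2q^2$ you computed comes from. If $|\E|>s(4)$, Lemma \ref{B78EBDD44D76A142EC1D2FE7B90B3BAF} yields \emph{two} planes $E_1,E_2\in\E$ with $\dim(E_1\cap E_2)\le 0$; Lemma \ref{klaus1} disposes of the flags whose plane misses $E_1$ or $E_2$, and every remaining flag has its plane meeting both $E_1$ and $E_2$ --- there are only about $2q^8$ such planes, so this case costs only about $2q^{10}$ in total. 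The case $|\S|>s(4)$ is dual. Your dichotomy (no exceptional plane or solid versus at least one) is too coarse to reach either of these cheap arguments: a single exceptional plane $E_*$ does not provide enough leverage, and the expensive re-derivation of the Section~4 machinery that it forces does not fit in the budget.
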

\begin{proof}
	Let $\E$ be the set consisting of all planes of $\PG(6,q)$ that lie in at least $q+2$ flags of $C$, and let $\S$ be the set consisting of all solids of $\PG(6,q)$ that lie in at least $q+2$ flags of $C$. We distinguish three cases.

	Case 1. We assume that $|\E|\le s(4)$ and $|\S|\le s(4)$. In this case we choose a subset $C'$ of $C$ such that every plane and every solid of $C'$ lies in at most $q+1$ flags of $C'$. Since every plane and solid lies in at most $q^2+q+1$ flags of $C$, we can find such a subset with $|C'|\ge |C|-(|\E|+|\S|)q^2$ and then $|C|\le |C'|+2\cdot s(4)\cdot q^2$. Now the statement follows by applying Proposition \ref{FA8D87A7DFE81F19C5AA9B3D4473192B} to $C'$.

	Case 2. We assume that $|\E|>s(4)$. Lemma \ref{B78EBDD44D76A142EC1D2FE7B90B3BAF} proves the existence of planes $E_1,E_2\in\E$ satisfying $\dim(E_1\cap E_2)\le 0$. From Lemma \ref{klaus1} we know that at most
	\begin{align}
		2\cdot s(1,4)\cdot s(2)\cdot (s(2)+1)\label{6362106A47EF61247CAF823DC9C680DE}
	\end{align}
	flags $(E,S)\in C$ satisfy $E\cap E_1=\emptyset$ or $E\cap E_2=\emptyset$. It remains to find an upper bound on the number of flags in $C$ whose planes meet both $E_1$ and $E_2$. Therefore, we count the number of planes of $\PG(6,q)$ that meet $E_1$ and $E_2$. First consider the case that $E_1\cap E_2$ is a point $Q$. In this case there are $s(0,2,6)$ planes on $Q$, there are $(s(2)-1)^2(s(1,2,6)-(2\cdot s(0,1,2)-1))$ planes that do not contain $Q$ and meet both $E_1$ and $E_2$ in exactly one point and there are $2\cdot s(0,-1,1,2)(s(2)-1)$ planes that do not contain $Q$ and meet $E_1$ or $E_2$ in a line and the other plane in a point. Thus, in this case the number of planes that meet $E_1$ and $E_2$ is equal to
	\begin{align*}
		n:=2q^8+4q^7+6q^6+4q^5+4q^4+3q^3+2q^2+q+1.
	\end{align*}
	If $E_1$ and $E_2$ are skew than a similar calculation shows that there are even less than $n$ planes that meet $E_1$ and $E_2$, so that $n$ is an upper bound for the number of planes that meet $E_1$ and $E_2$ in both situations. Since every plane lies in at most $s(2)$ flags of $C$, it follows that there are at most $n\cdot s(2)$ flags $(E,S)\in C$ such that $E$ meets $E_1$ and $E_2$. Together with the count in equation (\ref{6362106A47EF61247CAF823DC9C680DE}) we find $|C|\le n\cdot s(2)+2\cdot s(1,4)\cdot s(2)\cdot (s(2)+1)$ and this bound is better than the one in the statement.

	Case 3. We assume that $|\S|>s(4)$. This is dual to Case 2.
\end{proof}
\end{section}

\begin{section}{Proof of the theorem}

In this section, $\Gamma$ denotes the Kneser graph of plane-solid flags in $\PG(6,q)$ and $C$ denotes a maximal independent set of $\Gamma$.

\begin{Lemma}\label{0398FE63758F2951C1241B8A6382A440_Klaus}
	\begin{enumerate}[label=\roman*)]
	\item
		Every solid $S$ of $\PG(6,q)$ has a subspace $U$ with the following property: For every plane $E$ of $S$ we have $(E,S)\in C$ if and only if $U\subseteq E$.
	\item
		For every plane $E$ of $\PG(6,q)$ there exists a subspace $U$ containing $E$ with the following property: For every solid $S$ on $E$ we have $(E,S)\in C$ if and only if $S\subseteq U$.
	\end{enumerate}
\end{Lemma}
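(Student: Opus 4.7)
My plan is to prove part (i) directly and then to obtain part (ii) from it by the self‑duality of $\Gamma$ pointed out in the introduction: a duality $\delta$ of $\PG(6,q)$ exchanges planes and solids, preserves incidence, and preserves the general‑position relation, so it sends the maximal independent set $C$ to another maximal independent set $C^\delta$. Applying part (i) to $C^\delta$ at the solid $\delta(E)$ and dualizing the subspace returned by (i) will produce a subspace $U\supseteq E$ of the form demanded by (ii).

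For part (i), I fix a solid $S$ and set $\mathcal{P}_S:=\{E:E\text{ is a plane of }S\text{ with }(E,S)\in C\}$. The central step is the following closure property: \emph{if $E_1,E_2\in\mathcal{P}_S$ are distinct and $\ell:=E_1\cap E_2$, then every plane $E_3$ of $S$ containing $\ell$ lies in $\mathcal{P}_S$.} I would prove this by maximality of $C$. If $(E_3,S)\notin C$, maximality provides $(F,T)\in C$ with $E_3\cap T=\emptyset$ and $F\cap S=\emptyset$. Combining $(E_i,S)\in C$ and $F\cap S=\emptyset$ with the independence of $C$ forces $E_i\cap T\neq\emptyset$ for $i\in\{1,2\}$. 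On the other hand, $E_3\subseteq S$ and $E_3\cap T=\emptyset$ imply that the subspace $S\cap T$ is skew to the plane $E_3$ inside the three‑dimensional projective space $S$, so $S\cap T$ has projective dimension at most $0$. If $S\cap T=\emptyset$, then $E_1\cap T\subseteq E_1\cap S\cap T=\emptyset$, a contradiction; if $S\cap T=\{P\}$, then $E_1\cap T=E_2\cap T=\{P\}$, whence $P\in E_1\cap E_2=\ell\subseteq E_3$ and $P\in E_3\cap T$, again a contradiction.

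Once the closure property is available, the conclusion is a standard fact about projective spaces. Identifying $S$ with $\PG(3,q)$ and passing to its dual, where planes of $S$ correspond to points and pencils of planes through a line correspond to projective lines, the closure property translates into: the image of $\mathcal{P}_S$ is a subset of $\PG(3,q)$ which, together with any two of its distinct points, contains the whole line they span. Such a subset is a projective subspace, so dualizing back we obtain a subspace $U\subseteq S$ such that $\mathcal{P}_S$ is exactly the set of planes of $S$ containing $U$, with the degenerate conventions $U=S$ when $\mathcal{P}_S=\emptyset$ and $U=\emptyset$ when $\mathcal{P}_S$ consists of all planes of $S$. The main obstacle is the dimension count for $S\cap T$ inside $\PG(3,q)$; the remaining work is combinatorial bookkeeping and the duality translation from (i) to (ii).
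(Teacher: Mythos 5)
Your proof is correct and takes essentially the same route as the paper: the paper establishes the analogous line-closure property for the solids through a fixed plane (working in the quotient space, and using independence plus maximality of $C$ to show the intermediate flag can be added), then obtains the other part by the self-duality of the flag type, exactly as you do in the mirror-image direction. Your contrapositive formulation — extracting a witness flag $(F,T)$ and locating the point $S\cap T$ on $\ell\subseteq E_3$ — is just the dual of the paper's direct verification that $C\cup\{(E,S)\}$ remains independent.
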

\begin{proof}
	Since the two statements are dual to each other, it suffices to prove the first statement. Thus consider a plane $E$ and let $\S$ be the set of solids $S$ satisfying $E\subseteq S$ and $(E,S)\in C$. In the quotient space $\PG(6,q)/E$ the set $\{S/E\mid S\in\S\}$ is a set of points and we have to show that this set is a subspace of $\PG(6,q)/E$. In that regard, it is sufficient to show for any two distinct solids $S_1,S_2\in \S$ and every solid $S$ with $E\subseteq S\subseteq \erz{S_1,S_2}$ we have $S\in\S$. Let $S$ be such a solid. If $(E',S')$ is any flag of $C$ then either $S'\cap E\not=\emptyset$ or $E'$ meets $S_1\setminus E$ and $S_2\setminus E$. In the second case $E'$ meets $\erz{S_1,S_2}$ in a line and hence $E'$ meets $S$. Thus for every $(E',S')\in C$ we have $E\cap S'\not=\emptyset$ or $E'\cap S\not=\emptyset$. This shows that $C\cup\{(E,S)\}$ is an independent set of $\Gamma$ and since $C$ is a maximal independent set we have $(E,S)\in C$, that is, $S\in\S$.
\end{proof}

\begin{Definition}
	A plane $E$ will be called \highlight{saturated} (for $C$) if $(E,S)\in C$ for all solids $S$ of $\PG(6,q)$ that contain $E$. Dually, a solid $S$ will be called \highlight{saturated} (for $C$), if $(E,S)\in C$ for all planes $E$ of $S$.
\end{Definition}	

\begin{Lemma}\label{saturated_lemma}
	\begin{enumerate}[label=\roman*)]
	\item
		For every saturated solid $S$ and every flag $(E',S')\in C$ we have $E'\cap S\neq\emptyset$.\label{saturated1}
	\item
		If $S$ is a solid with $S\cap E'\neq\emptyset$ for all flags $(E',C')$ of $C$, then $S$ is saturated.\label{saturated2}
	\item
		If $S$ and $S'$ are saturated solids, then $\dim(S\cap S')\ge 1$.\label{saturated3}
	\item
		Let $H$ be a hyperplane of $\PG(6,q)$ and suppose that $E\subseteq H$ for all flags $(E,S)\in C$. Then every solid of $H$ is saturated. \label{saturated4}
	\end{enumerate}
\end{Lemma}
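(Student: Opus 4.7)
My plan is to address the four parts in order, treating \ref{saturated2} as a maximality criterion that then powers \ref{saturated3} and \ref{saturated4}.

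For \ref{saturated1}, I argue by contradiction. Suppose \((E',S')\in C\) with \(E'\cap S=\emptyset\); since \(\dim E'+\dim S=5<6\), this already forces \(\langle E',S\rangle=\PG(6,q)\). Working inside \(S'\), both \(E'\) and \(S\cap S'\) are subspaces of the solid \(S'\) and meet in \(E'\cap S=\emptyset\); Grassmann's formula inside \(S'\) therefore gives \(\dim(E'+(S\cap S'))=3+\dim(S\cap S')\le 3\), so \(\dim(S\cap S')\le 0\). Combined with the standard lower bound \(\dim(S\cap S')\ge 3+3-6=0\) that holds for any two solids in \(\PG(6,q)\), the intersection is a single point \(Q\). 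Because \(S\) is a solid, plenty of planes \(E\subseteq S\) avoid \(Q\); then \(E\cap S'\subseteq (S\cap S')\setminus\{Q\}=\emptyset\). Saturation of \(S\) now yields \((E,S)\in C\), which is then adjacent to \((E',S')\), contradicting independence.

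Part \ref{saturated2} is essentially a one-line maximality argument: for every plane \(E\subseteq S\) the hypothesis makes \((E,S)\) non-adjacent to every element of \(C\), so \(C\cup\{(E,S)\}\) is still independent, and maximality gives \((E,S)\in C\). Part \ref{saturated3} is a variation of \ref{saturated1}: only \(\dim(S\cap S')=0\) has to be ruled out, since the lower bound \(\dim(S\cap S')\ge 0\) is automatic. If \(S\cap S'\) were a single point \(Q\), pick planes \(E\subseteq S\) and \(E'\subseteq S'\) both avoiding \(Q\); by saturation both flags lie in \(C\), yet \(E\cap S'=E'\cap S=\emptyset\) makes them adjacent, a contradiction. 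Finally \ref{saturated4} is an immediate dimension count in \(H\cong\PG(5,q)\): for any solid \(T\subseteq H\) and any flag \((E',S')\in C\) the hypothesis yields \(E'\subseteq H\), so \(\dim(T\cap E')\ge 3+2-5=0\); thus the hypothesis of \ref{saturated2} holds for \(T\), which is therefore saturated.

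The only step that requires real thought is \ref{saturated1}, where the key geometric observation is that a plane \(E'\) skew to a solid \(S\) in \(\PG(6,q)\) forces the solid \(S'\) containing \(E'\) to intersect \(S\) in at most a single point. Once this dimension collapse is in hand, the remaining three parts are either an application of maximality or one line of Grassmann's formula.
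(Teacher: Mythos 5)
Your proof is correct and follows essentially the same route as the paper's: the same dimension collapse $\dim(S\cap S')=0$ for part i), the same one-line maximality argument for ii), and the same point-avoiding plane construction for iii) and dimension count for iv). You merely spell out the Grassmann computations that the paper leaves implicit.
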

\begin{proof}
	\begin{enumerate}[label=\roman*)]
	\item
		Suppose that there is a flag $(E',S')\in C$ with $E'\cap S=\emptyset$. Since $\PG(6,q)$ has dimension $6$, it follows $S'\cap S$ is a point $P$ with $P\notin E'$. Let $E$ be a plane of $S$ with $P\notin E$. Then $E\cap S'=\emptyset$. As $S$ is a saturated solid we have $(E,S)\in C$. But then $(E,S)$ and $(E',S')$ are flags of the independent set $C$ with $E\cap S'=\emptyset$ and $E'\cap S=\emptyset$, a contradiction.
	\item
		Let $E$ be a plane of $S$. We have to show that $(E,S)\in C$. Since $S\cap E'\not=\emptyset$ for every flag $(E',S)$ of $C$, the set $C\cup\{(E,S)\}$ is independent. Maximality of $C$ implies $(E,S)\in C$.
	\item
		Assume to the contrary that $S$ and $S'$ only meet in a point $P$. Choose planes $E$ of $S$ and $E'$ of $S'$ with $P\notin E,E'$. Then $S\cap E'=\emptyset=S'\cap E$. Hence $(E,S)$ and $(E',S')$ are adjacent elements of the Kneser graph $\Gamma$. As $C$ is independent, this is a contradiction.
	\item
		Let $S$ be a solid of $H$. The dimension formula shows that $S\cap E\not=\emptyset$ for all planes $E$ of $H$. Therefore part \ref{saturated2} shows that $S$ is saturated.\qedhere
	\end{enumerate}
\end{proof}

\begin{Lemma}\label{520AA08BDD632D910FFDEAC535B0337F}
	Let $C$ be a maximal independent set of $\Gamma$. If there are more than $c:=q^7+2q^6+2q^5+3q^4+2q^3+2q^2+q+1$ saturated solids for $C$, then $C=\Lambda(H,\E)$ for some hyperplane $H$ and some maximal independent set $\E$ of the Kneser graph of planes of $H$ (cf. Example \ref{example_general}).
\end{Lemma}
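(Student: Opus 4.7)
The plan is to first use the many saturated solids to pin down a hyperplane $H$ containing them all, then force every plane of every flag of $C$ into $H$, and finally invoke Proposition \ref{PropositionOnHE}.

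By Lemma \ref{saturated_lemma} \ref{saturated3} any two saturated solids meet in at least a line, so the set $X$ of saturated solids is an independent set of the Kneser graph of solids of $\PG(6,q)$. Extend $X$ to a maximal such independent set $Y$. Then $|Y|\ge|X|>c>q^6+2q^5+3q^4+3q^3+2q^2+q+1$, so Result \ref{0950C37D657C2988FA3944093CDE8FD6} forces $Y$ to consist of all solids of some hyperplane $H$. In particular every saturated solid is contained in $H$.

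Next I would show that every flag $(E,S)\in C$ satisfies $E\subseteq H$. Suppose not, so $\ell:=E\cap H$ is a line. By Lemma \ref{saturated_lemma} \ref{saturated1} the plane $E$ meets every saturated solid $T$, and since $T\subseteq H$ we have $\emptyset\neq T\cap E=T\cap\ell$. Thus each of the more than $c$ saturated solids meets $\ell$. But the number of solids of $H\simeq\PG(5,q)$ meeting $\ell$ is $\gauss{6}{4}-q^8$, since a solid of $H$ skew to $\ell$ is precisely a complement of the $2$-dimensional vector subspace $\ell$ in the underlying $6$-dimensional vector space, and there are $q^{2\cdot 4}=q^8$ such complements. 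Using the explicit expansion of $s(3,5)=\gauss{6}{4}$ in Result \ref{0950C37D657C2988FA3944093CDE8FD6}, one verifies $\gauss{6}{4}-q^8=c$, contradicting $|X|>c$.

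Therefore every plane of a flag of $C$ is contained in $H$. Lemma \ref{saturated_lemma} \ref{saturated4} then implies that every solid of $H$ is saturated, so $\Lambda(H,\emptyset)\subseteq C$. Maximality of $C$ together with Proposition \ref{PropositionOnHE} gives $C=\Lambda(H,\E)$ for some maximal independent set $\E$ of the Kneser graph of planes of $H$. The only nontrivial step is the identity $\gauss{6}{4}-q^8=c$, which is exactly why the constant in the hypothesis takes this precise form.
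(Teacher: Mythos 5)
Your proposal is correct and follows essentially the same route as the paper's proof: pin down the hyperplane $H$ via Lemma \ref{saturated_lemma}~\ref{saturated3} and Result \ref{0950C37D657C2988FA3944093CDE8FD6}, rule out a flag $(E,S)\in C$ with $E\not\subseteq H$ by counting the solids of $H$ meeting the line $E\cap H$ (your complement count $\gauss{6}{4}-q^{8}=c$ is exactly the paper's $s(3,5)-s(1,-1,3,5)$), and conclude with Lemma \ref{saturated_lemma}~\ref{saturated4} and Proposition \ref{PropositionOnHE}.
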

\begin{proof}
	Let $\S$ be the set of saturated solids in $\Pi_3(C)$. We have $c>q^6+2q^5+3q^4+3q^3+2q^2+q+1$ and according to Corollary \ref{saturated_lemma} \ref{saturated3} we have $\dim\left(S_1\cap S_2\right)\ge 1$ for all $S_1,S_2\in\S$. Result \ref{0950C37D657C2988FA3944093CDE8FD6} shows that there exists a hyperplane $H$ containing all saturated solids. If there would be a flag $(E,S)\in C$ such that $E\not\le H$, then according to Lemma \ref{saturated_lemma} \ref{saturated1} all solids of $\S$ would have non-empty intersection with the line $E\cap H$ and thus
	\begin{align*}
	|\S|\le s(3,5)-s(1,-1,3,5)=q^7+2q^6+2q^5+3q^4+2q^3+2q^2+q+1,
	\end{align*}
	which is a contradiction. This shows that $E\subseteq H$ for all planes $E\in\Pi_2(C)$. Lemma \ref{saturated_lemma} \ref{saturated4} shows that all solids of $H$ are saturated. This means that $\Lambda(H,\emptyset)\subseteq C$. Proposition \ref{PropositionOnHE} now proves the statement.
\end{proof}

\begin{Theorem}\label{main2}
	Suppose that $q\ge8$ and that $C$ is a maximal independent set in $\Gamma$ with
	\begin{align*}
		|C|>26q^{10}+83q^9+159q^8+216q^7+222q^6+193q^5+144q^4+97q^3+53q^2+22q+5.	
	\end{align*}
	Then $C=\Lambda(H,\E)$ for a hyperplane $H$ and a maximal set $\E$ of mutually intersecting planes of $H$, or $C=\Lambda(P,\S)$ for a point $P$ and a maximal set $\S$ of solids on $P$ any two of which share at least a line.
\end{Theorem}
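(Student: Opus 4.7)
The strategy is to reduce the theorem to Lemma \ref{520AA08BDD632D910FFDEAC535B0337F} and its dual: both deliver one of the two sought structures once a numerical threshold on the number of saturated solids (respectively, saturated planes) is reached. Since flags of type $\{2,3\}$ in $\PG(6,q)$ are self-dual, applying Lemma \ref{520AA08BDD632D910FFDEAC535B0337F} in the dual space shows that the presence of more than $c$ saturated planes of $C$ forces $C=\Lambda(P,\S)$. It therefore suffices to extract from $|C|>T$ (where $T$ denotes the bound stated in the theorem) that either more than $c$ saturated planes or more than $c$ saturated solids of $C$ exist.

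First I would analyse the possible flag counts through a fixed plane. Lemma \ref{0398FE63758F2951C1241B8A6382A440_Klaus} shows that for every plane $E$ the solids $S$ with $(E,S)\in C$ correspond to the points of a subspace of $\PG(6,q)/E\cong\PG(3,q)$; the number of flags of $C$ through $E$ therefore lies in $\{0,1,q+1,s(2),s(3)\}$ and equals $s(3)$ exactly when $E$ is saturated. The same dichotomy holds for solids by duality. Writing $a$ and $b$ for the numbers of saturated planes and saturated solids, and $C_{\circ}\subseteq C$ for the subset of flags whose plane and solid are both non-saturated, every plane and every solid of $\PG(6,q)$ occurs in at most $s(2)$ flags of $C_{\circ}$. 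Proposition \ref{580D3E3B704A762B79A76A6CCC614713} therefore gives $|C_{\circ}|\le B$, with $B$ its bound. Since each saturated plane contributes exactly $s(3)$ flags to $C$ and each saturated solid contributes exactly $s(2,3)=s(3)$ flags, the remaining flags total at most $(a+b)s(3)$, yielding $|C|\le(a+b)s(3)+B$.

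If $b>c$, Lemma \ref{520AA08BDD632D910FFDEAC535B0337F} directly gives $C=\Lambda(H,\E)$; if $a>c$, the dual version gives $C=\Lambda(P,\S)$. So assume toward a contradiction that $a\le c$ and $b\le c$, whence $|C|\le 2c\cdot s(3)+B$. A direct expansion shows that this exceeds $T$ by $2q^9+O(q^8)$, so a modest additional saving is required to complete the argument. The saving comes from the incidence constraints imposed on $C_{\circ}$ by the saturated objects of $C$: by Lemma \ref{saturated_lemma} \ref{saturated1} and its dual, every flag $(E',S')\in C_{\circ}$ has $E'$ meeting every saturated solid of $C$ and $S'$ meeting every saturated plane. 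Re-running the $|\Delta_P(\cdot)|$-estimates of Lemmas \ref{D0DCC8ED0186F40E225BD9805289E7E5}--\ref{AB78485B265AA118CEC0F6A0C158817C}, which drive Propositions \ref{FA8D87A7DFE81F19C5AA9B3D4473192B} and \ref{580D3E3B704A762B79A76A6CCC614713}, under these extra constraints tightens $|C_{\circ}|\le B$ by the needed amount and produces $|C|\le T$, contradicting the hypothesis.

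The main obstacle is this last refinement. The crude estimate $|C|\le 2c\cdot s(3)+B$ falls short of $T$ only by a term of order $q^9$, and closing this gap demands careful re-examination of the case analysis behind Proposition \ref{580D3E3B704A762B79A76A6CCC614713} in the presence of the additional incidence constraints, so that the accounting of flags in $C_\circ$ gains the required $q^9$. With that refinement in hand, the assumption $a\le c$ and $b\le c$ contradicts the size hypothesis, so one of $a>c$ or $b>c$ must hold and the theorem follows from Lemma \ref{520AA08BDD632D910FFDEAC535B0337F} or its dual.
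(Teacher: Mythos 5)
Your overall strategy matches the paper's: reduce to Lemma \ref{520AA08BDD632D910FFDEAC535B0337F} and its dual by showing that if both the number $a$ of saturated planes and the number $b$ of saturated solids are at most $c$, then $|C|$ cannot exceed the stated threshold, the bulk of the bound coming from Proposition \ref{580D3E3B704A762B79A76A6CCC614713}. However, your accounting has a genuine gap that you yourself flag but do not close. By discarding \emph{every} flag through a saturated plane or solid you pay $(a+b)\cdot s(3)\le 2c\cdot s(3)$, which overshoots the theorem's bound by roughly $2c(q^2+q+1)\approx 2q^9$. Your proposed remedy --- re-running the $|\Delta_P(\cdot)|$ estimates behind Propositions \ref{FA8D87A7DFE81F19C5AA9B3D4473192B} and \ref{580D3E3B704A762B79A76A6CCC614713} under the extra incidence constraints coming from Lemma \ref{saturated_lemma} --- is not carried out, and there is no reason to believe a saving of order $q^9$ in those estimates is ``modest'': it would require redoing the entire case analysis of both special cases. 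As written, the argument does not establish the stated bound.

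The missing idea is a more frugal pruning. For each saturated plane $E$ fix a hyperplane $H_E\supseteq E$ and discard only the flags $(E,S)$ with $S\not\subseteq H_E$; dually, for each saturated solid $S$ fix a point $P_S\in S$ and discard only the flags $(E,S)$ with $P_S\notin E$. This removes only $q^3$ flags per saturated object, so the loss is at most $2cq^3=2q^{10}+4q^9+\dots$, while the retained set $C'$ still has the property that the solids through any plane (saturated or not) span a proper subspace, by Lemma \ref{0398FE63758F2951C1241B8A6382A440_Klaus}; hence every plane and every solid lies in at most $s(2)$ flags of $C'$ and Proposition \ref{580D3E3B704A762B79A76A6CCC614713} applies to $C'$. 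Then $|C|\le 2cq^3+|C'|$ gives exactly the threshold in the statement. In short: you should not exclude the saturated objects from the set fed into Proposition \ref{580D3E3B704A762B79A76A6CCC614713}; you should trim their flag sets down to a proper subspace's worth, which is all that proposition needs.
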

\begin{proof}
	The class of examples described in Example \ref{example_general} is closed under duality. In view of Lemma \ref{520AA08BDD632D910FFDEAC535B0337F} we may assume that there exists at most $c:=q^7+2q^6+2q^5+3q^4+2q^3+2q^2+q+1$ saturated solids, and, dually, that there are at most $c$ saturated planes. For every saturated plane $E$ choose one hyperplane $H_E$ on $E$, and for every saturated solid $S$ choose a point $P_S$ of $S$. Let $C'$ be the subset of $C$ that is obtained from $C$ by removing all flags $(E,S)$ such that $E$ is saturated and $S$ is not contained in $H_E$ and by removing all flags $(E,S)$ such that $S$ is saturated and $E$ does not contain $P_S$. Then
	$|C'|\ge |C|-2cq^3$, that is
	\begin{align}\label{EQ_D12997E44FA99026A1CC9471252C3E24}
		|C|\le 2(q^7+2q^6+2q^5+3q^4+2q^3+2q^2+q+1)\cdot q^3+|C'|.
	\end{align}
	Lemma \ref{0398FE63758F2951C1241B8A6382A440_Klaus} shows for every plane $E$ that is not saturated for $C$ has the property that the solids $S$ with $(E,S)\in C$ span a proper subspace of $\PG(6,q)$. Therefore the construction of $C'$ implies that every plane $E$ has the property that the solids $S$ with $(E,S)\in C'$ span a proper subspace of $\PG(6,q)$. Consequently every plane of $\PG(6,q)$ lies in at most $q^2+q+1$ flags of $C'$. Dually, every solid $S$ of $\PG(6,q)$ lies in at most $q^2+q+1$ flags of $C'$. Therefore Proposition \ref{580D3E3B704A762B79A76A6CCC614713} proves an upper bound for $|C'|$. Now (\ref{EQ_D12997E44FA99026A1CC9471252C3E24}) proves the bound for $|C|$ that is given in the statement.
\end{proof}

\begin{Corollary}\label{Cor_main}
	For $q>27$, the maximal independent set in the Kneser graph of flags of type $\{2,3\}$ in $\PG(6,q)$ with $|C|\ge q^{11}+2q^{10}$ are the independent sets described in Example \ref{example_general}.
\end{Corollary}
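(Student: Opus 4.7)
The plan is to invoke Theorem \ref{main2} as a black box. The two families $\Lambda(H,\E)$ and $\Lambda(P,\S)$ appearing in the conclusion of Theorem \ref{main2} are precisely the sets described in Example \ref{example_general} (the second being the dual construction of the first). So it suffices to show that the cleaner hypothesis $|C|\ge q^{11}+2q^{10}$ of the corollary already implies the more elaborate polynomial hypothesis of Theorem \ref{main2}, whenever $q$ is a prime power with $q>27$.

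This reduces to a routine polynomial comparison,
\[
q^{11}+2q^{10} \;>\; 26q^{10}+83q^9+159q^8+216q^7+222q^6+193q^5+144q^4+97q^3+53q^2+22q+5.
\]
After subtracting $26q^{10}$ from both sides and dividing through by $q^{10}$, this is equivalent to $q-24 > 83/q+159/q^2+\dotsb$. Since $q$ is a prime power, $q>27$ forces $q\ge 29$. For such $q$ the right-hand side is bounded above by a constant slightly larger than $3$, while the left-hand side is at least $5$; the inequality therefore holds throughout the claimed range. The other hypothesis of Theorem \ref{main2}, namely $q\ge 8$, is automatic.

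With these verifications in place, Theorem \ref{main2} applies and gives $C=\Lambda(H,\E)$ or $C=\Lambda(P,\S)$, both of which are instances of Example \ref{example_general}. The step presents no real obstacle; the corollary is essentially Theorem \ref{main2} restated with a cleaner numerical threshold. It is worth noting that the condition $q>27$ is sharp for this argument: at $q=27$ the reduced inequality reads $3>83/27+\dotsb$, which fails, which is consistent with the remark in the introduction that extending the main result to $q=27$ requires a sharpening of Lemma \ref{A0B3BAD29C13B92BD5BADB679BB91E4B} rather than a modification of this final deduction.
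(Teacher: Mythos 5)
Your proposal is correct and is exactly the argument the paper intends (the corollary is stated without proof, immediately after Theorem \ref{main2}, precisely because it amounts to checking that $q^{11}+2q^{10}$ exceeds the bound of Theorem \ref{main2} for prime powers $q>27$, i.e.\ $q\ge 29$). Your numerical verification, including the observation that the comparison fails at $q=27$, is accurate; the only cosmetic remark is that the ``are precisely'' reading of the corollary also uses Proposition \ref{PropositionOnHE} to see that every set from Example \ref{example_general} is indeed maximal of size at least $q^{11}+2q^{10}$, which is immediate from $|\Lambda(H,\E)|\ge s(3,5)\cdot s(3)$.
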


Theorem \ref{main1} follows from this Corollary and Proposition \ref{PropositionOnHE}~\ref{A4D056CBFFA1708935F551E64B54A96C}.
\end{section}

\begin{section}{Bounds on the chromatic number of $\Gamma$}

Let $\Gamma$ be the Kneser graph of flags of type $\{2,3\}$ in $\PG(6,q)$. The chromatic number of $\Gamma$ is the smallest number $\chi$ such that the vertex set can be represented as the union of $\chi$ independent sets. Using the upper bound $\alpha$ for the size of such an independent set this immediately gives the bound $\chi\ge\frac{n}{\alpha}$. With the upper bound from Theorem \ref{main1} we find

\begin{Proposition}
	For $q>27$, the chromatic number of $\Gamma$ is at least $q^4-q^2+2q+1$.
\end{Proposition}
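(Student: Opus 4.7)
My plan is to reduce everything to the one-line bound $\chi(\Gamma)\ge |V(\Gamma)|/\alpha(\Gamma)$ already highlighted in the paragraph preceding the statement. A flag of type $\{2,3\}$ is specified by choosing a solid and then a plane inside it, so $|V(\Gamma)|=\gauss{7}{4}\gauss{4}{3}$, while Theorem \ref{main1} supplies $\alpha(\Gamma)=\gauss{6}{4}\gauss{4}{3}+\gauss{5}{3}q^3$ for $q>27$. Since $\chi(\Gamma)$ is an integer, it suffices to establish the strict polynomial inequality
\begin{align*}
\gauss{7}{4}\gauss{4}{3}>(q^4-q^2+2q)\bigl(\gauss{6}{4}\gauss{4}{3}+\gauss{5}{3}q^3\bigr),
\end{align*}
which will then force $\chi(\Gamma)\ge q^4-q^2+2q+1$.

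To verify this inequality I would use the Pascal-type recurrence $\gauss{7}{4}=\gauss{6}{3}+q^4\gauss{6}{4}$ together with the analogous identity $\gauss{6}{3}=\gauss{5}{2}+q^3\gauss{5}{3}=(1+q^3)\gauss{5}{3}$, where the last equality uses the symmetry $\gauss{5}{2}=\gauss{5}{3}$. Substituting gives $\gauss{7}{4}=(1+q^3)\gauss{5}{3}+q^4\gauss{6}{4}$, so after cancelling $q^4\gauss{6}{4}\gauss{4}{3}$ from both sides of the target inequality only an explicit polynomial comparison remains. A routine expansion shows that the difference of the two sides equals
\begin{align*}
q^{11}-q^{10}+q^9-q^8-q^7-q^6-q^4+q^3+q^2+1,
\end{align*}
and this is positive for every integer $q\ge 2$; for instance, the single term $q^{10}(q-1)$ is already larger than the sum of the absolute values of all the negative contributions.

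No conceptual obstacle is anticipated: the entire proposition is an evaluation of known quantities followed by a polynomial-positivity check. The only thing that requires care is the arithmetic bookkeeping, since the two sides of the target inequality agree to leading order in $q$ and the positive margin appears only at degree $11$, so a sloppy truncation of any Gaussian coefficient could easily swamp the true sign of the difference.
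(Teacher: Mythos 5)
Your proposal is correct and is exactly the paper's argument: the authors also obtain the proposition from $\chi\ge n/\alpha$ with $n=\gauss{7}{4}\gauss{4}{3}$ and the value of $\alpha$ from Theorem \ref{main1}, leaving only the polynomial comparison. Your computed difference $q^{11}-q^{10}+q^9-q^8-q^7-q^6-q^4+q^3+q^2+1$ checks out and is indeed positive in the range $q>27$ that matters.
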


On the other hand, if $V$ is a subspace of dimension $4$ of $\PG(6,q)$, then the independent sets $\Lambda(P,\emptyset)$ with $P\in V$ comprise all vertices of $\Gamma$, so we have the trivial upper bound $\chi\le s(4)=q^4+q^3+q^2+q+1$. We can slightly improve this bound.

\begin{Proposition}
	The chromatic number $\chi$ of $\Gamma$ satisfies $\chi\le q^4+q^3+q^2+1$.
\end{Proposition}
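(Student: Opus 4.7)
My plan is to cover the vertex set of $\Gamma$ by $q^4+q^3+q^2+1$ independent sets of the type $\Lambda(P,l)$ introduced in Example~\ref{example_largest}. First I fix a 4-dimensional subspace $V$ of $\PG(6,q)$, a line $l\subseteq V$, and a distinguished point $P_0\in l$; denote the remaining $q$ points of $l$ by $P_1,\dots,P_q$. I call a line $l^*\subseteq V$ a \emph{bad line} if $l^*\neq l$ and $l^*$ meets $l$ in one of the points $P_1,\dots,P_q$. Because any line through two of the points $P_1,\dots,P_q$ must equal $l$, the families of bad lines through different $P_i$ are pairwise disjoint, and a direct count yields exactly $q(s(3)-1)=q^4+q^3+q^2$ bad lines, which matches $|V\setminus l|=s(4)-s(1)$.

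Next I examine the bipartite incidence graph with parts $V\setminus l$ and the set of bad lines. Each point $P\in V\setminus l$ lies on precisely the $q$ bad lines $\langle P,P_i\rangle$ with $i=1,\dots,q$, and conversely each bad line contains exactly its $q$ points different from $l^*\cap l$ as points of $V\setminus l$. The incidence graph is therefore $q$-regular, so by K\"onig's theorem (or Hall's marriage theorem) it admits a perfect matching $\phi\colon V\setminus l\to\{\text{bad lines}\}$ satisfying $P\in\phi(P)$ for all $P$. I then propose the family $\mathcal{F}:=\{\Lambda(P_0,l)\}\cup\{\Lambda(P,\phi(P)):P\in V\setminus l\}$, which has $q^4+q^3+q^2+1$ members, each an independent set by Example~\ref{example_largest}.

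The remaining task is to verify that $\mathcal{F}$ covers every flag $(E,S)$, which I do by case analysis on $E\cap V$ (nonempty by the dimension formula). If $E$ meets $V\setminus l$ in a point $P$, then $(E,S)\in\Lambda(P,\phi(P))$; if $l\subseteq E$ or $P_0\in E$, then $(E,S)\in\Lambda(P_0,l)$. The only remaining situation is $E\cap V=\{P_i\}$ with $i\neq 0$ and $l\not\subseteq S$. In this case $\dim(S\cap V)\ge 3+4-6=1$, and $\dim(S\cap V)\le 2$ (else $S\subseteq V$ would force $E\subseteq V$, contradicting $E\cap V=\{P_i\}$). So $S\cap V$ is either a line $l^*\neq l$ through $P_i$, automatically a bad line, or a plane $\pi$ through $P_i$ with $\pi\cap l=\{P_i\}$ (the case $l\subseteq\pi$ being ruled out by $l\not\subseteq S$); in the plane case every line through $P_i$ inside $\pi$ is a bad line contained in $\pi\subseteq S$. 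In either sub-case I select a bad line $l^{**}\subseteq S$, and its $\phi$-preimage $P$ then satisfies $\phi(P)=l^{**}\subseteq S$, so $(E,S)\in\Lambda(P,\phi(P))$. The main obstacle is this last case analysis, particularly the assertion $\pi\cap l=\{P_i\}$ and the identification of the bad lines arising in $\pi$.
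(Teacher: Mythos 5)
Your proof is correct, and it arrives at the same bound by the same basic strategy as the paper — covering the vertex set by independent sets $\Lambda(X,m)$ for incident point--line pairs with $X$ ranging over a fixed $4$-space $V$ and $m$ a line meeting a fixed line of $V$ — but the mechanism for assigning a line to each point is genuinely different. The paper builds the assignment explicitly: starting from a nested flag $P\subseteq l\subseteq E\subseteq V$ and an auxiliary point $Q$, it partitions $V\setminus\langle P,Q\rangle$ into $q$ ``petals'' $M_i\setminus\{P\}$ and pairs each $X\in M_i$ with the line $\langle X,Q_i\rangle$ through a fixed point $Q_i$ of $\langle P,Q\rangle$. You instead note that the incidence graph between the $q^4+q^3+q^2$ points of $V\setminus l$ and the equally many ``bad'' lines is $q$-regular bipartite and invoke K\"onig's theorem to obtain a perfect matching $\phi$ with $X\in\phi(X)$; existence is all you need. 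Your covering verification is sound: the only flags not caught by $\Lambda(P_0,l)$ or by a point of $E$ in $V\setminus l$ have $E\cap V=\{P_i\}$ with $i\ge1$ and $l\not\subseteq S$, and then $1\le\dim(S\cap V)\le2$; in the plane case the claim $\pi\cap l=\{P_i\}$ follows because $P_i\in S\cap l$ while $l\not\subseteq S$, so every line of $\pi$ through $P_i$ is indeed a bad line contained in $S$. The trade-off is that the paper's argument produces an explicit colouring, whereas yours is shorter and pushes the combinatorial work into a standard matching theorem; both give exactly $q^4+q^3+q^2+1$ colour classes.
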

\begin{proof}
	Consider a point $P$, a line $l$, a plane $E$ and a $4$-space $V$ that are mutually incident. Let $Q$ be a point of $V$ that is not in $E$. Let $l_1,\dots,l_q$ be the lines of plane $\erz{l,Q}$ with $P\in l_i$ and $Q\notin l_i$, let $E_1,\dots,E_q$ be the planes of $\erz{E,Q}$ with $l\subseteq E_i$ and $Q\notin E_i$, and let $S_1,\dots,S_q$ be the solids of $V$ with $E\subseteq S_i$ and $Q\notin S_i$. For $i\in\{1,\dots,q\}$ put $M_i:=l_i\cup (E_i\setminus l)\cup (S_i\setminus E)$. Then $|M_i|=q^3+q^2+q+1$ with $M_i\cap M_j=P$ for distinct $i,j\in\{1,\dots,q\}$ and the union of the sets $M_1,\dots,M_q$ is $\{P\}\cup V\setminus\langle P,Q\rangle$. Let $\{Q_1,\dots,Q_q\}=\langle P,Q\rangle\setminus\{P\}$ and consider the independent set $\Lambda(X,\langle X, Q_i\rangle)$ for $X\in M_i$ and $i\in\{1,\dots,q\}$. Then for $i\in\{1,\dots,q\}$ all lines of $V$ on $Q_i$ occur in one of these sets and every solid that contains $Q_i$ contains a line $\langle X,Q_i\rangle$ with $X\in M_i$. Therefore the union of the sets $\Lambda(X,\langle X,Q_i\rangle)$ for $i\in\{1,\dots,q\}$ covers all vertices of $\Gamma$.
\end{proof}
\end{section}

\end{document}